\newtheorem{thm}{Theorem}
\newtheorem*{thm*}{Theorem}
\newtheorem{lemma}[thm]{Lemma}
\newtheorem*{lemma*}{Lemma}
\newtheorem{prop}[thm]{Proposition}
\newtheorem*{prop*}{Proposition}
\newtheorem{cor}[thm]{Corollary}
\newtheorem*{cor*}{Corollary}
\newtheorem*{claim*}{Claim}
\newtheorem*{observ*}{Observation}
\newtheorem*{fact*}{Fact}
\newtheorem{conj}[thm]{Conjecture}
\newtheorem*{conj*}{Conjecture}
\newtheorem{ex}[thm]{Example}
\newtheorem*{ex*}{Example}
\theoremstyle{remark}
\newtheorem*{rmk*}{Remark}
\newtheorem*{rmks*}{Remarks}
\newtheorem*{nota*}{Notation}
\newtheorem*{dfn*}{Definition}
\newtheorem*{dfns*}{Definitions}
\def\Z{\mathbb{Z}}
\def\N{\mathbb{N}}
\def\R{\mathbb{R}}
\newcommand\ip[1]{\left<#1\right>}
\begin{document}

\title{On Sums of Generating Sets in $\Z_2^n$}
\author{Chaim Even-Zohar 
\thanks{
Einstein Institute of Mathematics, The Hebrew University, Jerusalem 91904, Israel, 
chaim.evenzohar@mail.huji.ac.il.
This paper is based on the author's MSc thesis, under the supervision of Prof. Nati Linial.
}}
\maketitle

\abstract{
Let $A$ and $B$ be two affinely generating sets of $\Z_2^n$. As usual, we denote their Minkowski sum by $A+B$.
How small can $A+B$ be, given the cardinalities of $A$ and $B$? We give a tight answer to this question.
Our bound is attained when both $A$ and $B$ are unions of cosets of a certain subgroup of $\Z_2^n$.
These cosets are arranged as Hamming balls, the smaller of which has radius $1$.

By similar methods, we re-prove the Freiman--Ruzsa theorem in $\Z_2^n$, with an optimal upper bound.
Denote by $F(K)$ the maximal spanning constant $|\langle A \rangle| / |A|$ 
over all subsets $A \subseteq \Z_2^n$ with doubling constant $|A+A|/|A| \leq K$. 
We explicitly calculate $F(K)$, and in particular show that $4^K / 4K \leq F(K)\cdot(1+o(1))  \leq 4^K / 2K $.
This improves the estimate $F(K) = poly(K) 4^K$, found recently by Green and Tao \cite{green_tao} and by Konyagin \cite{konyagin}.
}

\section{Introduction}

Much work has been devoted to the study of Minkowski sums of sets.
Questions concerning such sums come up in geometry, and are at the core of additive combinatorics. 
Research in this area has blossomed in recent years, 
and even Tao and Vu's monograph~\cite{tao_vu} no longer covers all the most recent developments. 
In this paper we concentrate on the Minkowski sum of two generating sets of~$\Z_2^n$.

We first review some of the relevant literature. 
Let $G$ be an abelian group, and let $A$ and $B$ be two finite subsets of $G$. 
As usual, we denote
$$ A+B = \{a+b \;|\; a \in A, b \in B \}$$
and we ask about the minimum of $|A+B|$, given the cardinalities of $A$ and $B$.

In general, the answer ranges from $\max(|A|,|B|)$ to $|A|+|B|-1$, depending on the structure of $G$. 
For a torsion-free $G$, if $A$ and $B$ are arithmetic progressions with the same step,
then $|A+B| = |A|+|B|-1$, which is optimal. 
Likewise, if $G = \Z_p$ is cyclic of prime order, 
then the answer is given by the Cauchy--Davenport theorem, 
$|A+B| \geq \min(|A|+|B|-1,|G|)$~\cite{cauchy,davenport}.
Moreover, by a theorem of Vosper~\cite{vosper}, if $|A|+|B|<|G|$ then equality holds only for arithmetic progressions.
In the other extreme case, $G$ has a finite subgroup of a suitable cardinality. 
Thus, if $H \triangleleft G$ is a subgroup of cardinality $|H| = \max\left(|A|,|B|\right)$, 
an optimal choice is to have $A$ and $B$ be subsets of $H$, in which case $|A+B| = \max\left(|A|,|B|\right)$.
More generally, $|A+B|$ can be as small as $\max\left(|A|,|B|\right)$ if and only if 
$\min(|A|,|B|) \leq |H|$ and $|H|$ divides $\max(|A|,|B|)$~\cite[p. 55]{tao_vu}.
In the general case~\cite{eliahou_kervaire,eliahou_kervaire_plagne}, 
the smallest possible cardinality of $|A+B|$ is
$\min \left(\left\lceil |A|/|H|\right\rceil + \left\lceil |B|/|H|\right\rceil - 1 \right)|H|$,
where the minimum is over all finite subgroups $H$ of $G$.
In a sense, this result interpolates between the two extremes. 
In an optimal construction~\cite{bollobas_leader,eliahou_kervaire_plagne} the sets $A$ and $B$ 
are contained in $\left\lceil |A|/|H|\right\rceil$ and $\left\lceil |B|/|H|\right\rceil$ cosets of $H$,
whose arrangement is a lexicographical variant of an arithmetic progression.
In particular, for $G$ a $2$-torsion group this reduces to the well-studied Hopf--Stiefel function
~\cite{hopf,stiefel,yuzvinsky,bollobas_leader,eliahou_kervaire_vector_spaces,eliahou_kervaire_hs}.

{\em Stability} is a recurring theme in modern extremal combinatorics. 
Once an extremal problem is solved, it is interesting to explore 
what happens when we consider candidate solutions that do not resemble the global optimum. 
The crucial feature of the above-mentioned optimal constructions is that
$A$ and $B$ are densely packed in cosets of properly chosen subgroups of $G$. 
We therefore return to the original question,
under the requirement that $A$ and $B$ are not allowed to be contained in a proper subgroup of $G$ or a coset thereof.
The \emph{affine span} of $A$, denoted $\left\langle A \right\rangle$,
is the smallest coset (of any subgroup) containing $A$.
We say that $A$ \emph{affinely generates} $G$ if $\left\langle A \right\rangle = G$.
Clearly this definition coincides with the usual notion of a generating set if $0 \in A$.
The refined problem is as follows:
In a finitely generated abelian group $G$, find $\min|A+B|$ as a function of $|A|$ and $|B|$,
where $A$ and $B$ are finite affinely generating subsets of $G$.

Naturally, the structural properties of $G$ play a role in this problem as well.
For the torsion-free case, $G = \Z^d$, this question and similar ones were discussed by Ruzsa~\cite{ruzsa_zd},
and a full answer was finally given by Gardner and Gronchi~\cite{gardner_gronchi}.
In the extremal construction, the smaller set is a simplex of $d+1$ points,
on one of whose edges lies an arithmetic progression, 
and the other set is roughly the sum of several copies of it. As discussed there,
this is analogous to the Brunn--Minkowski theorem~\cite{schneider}.

Here we present the following lower bound
for the opposite extreme of a $2$-torsion group, $G = \Z_2^n$.

\begin{thm}\label{AB}
Suppose $A,B \subseteq G = \Z_2^n$ such that $\left\langle A \right\rangle = G$, 
$B \neq \varnothing$ and $|A| \leq \frac34 |G|$.\\
If $t$ is the largest positive integer such that
$$ |A| \;\leq\; \frac{t + 1}{2^t} \cdot |G| $$
and $0 \leq k < t$ and $w \in [-1,1]$ are such that
$$ |B| \;=\; \frac{ \binom{t}{0} + \binom{t}{1} + ... + \binom{t}{k} + w \binom{t-1}{k} }{2^t} \cdot |G| $$
then
$$ |A+B| \;\geq\; \frac{\binom{t}{0} + \binom{t}{1} + ... + \binom{t}{k} + \binom{t}{k+1} + w \binom{t-1}{k+1}}{2^t} \cdot |G| $$
This bound is tight when $w=0$, and it is attained by the sets
$$ A = D_1^t \times \Z_2^{n-t} \;\;\;\;\;\;\;\; 
B = D_k^t \times \Z_2^{n-t} \;\;\;\;\;\;\;\; 
A+B = D_{k+1}^t \times \Z_2^{n-t} $$
where $D_k^t = \left\{ x \in \Z_2^t \;\left|\; \#\{i|x_i=1\} \leq k \right. \right\}$ 
is a Hamming ball of radius $k$ in $\Z_2^t$.
\end{thm}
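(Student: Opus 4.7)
The plan is to reduce the general situation to the canonical Hamming-ball configuration via coordinate compressions, and then extract the bound from an isoperimetric inequality in a quotient hypercube.

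For the reduction, for each $i \in [n]$ introduce the standard coordinate compression $\sigma_i$: given $S \subseteq \Z_2^n$, replace each $x \in S$ satisfying $x_i = 1$ and $x + e_i \notin S$ by $x + e_i$. A slice-wise case analysis, partitioning both $A + B$ and $\sigma_i(A) + \sigma_i(B)$ by the $i$-th coordinate into the four pieces $A_u + B_v$ for $u, v \in \{0, 1\}$, yields the compression lemma $|\sigma_i(A) + \sigma_i(B)| \leq |A + B|$. Affine generation is preserved because $A$ and $\sigma_i(A)$ have the same image under the projection $\Z_2^n \to \Z_2^n / \langle e_i \rangle$, so their affine hulls have equal dimension. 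Iterating these compressions until stabilization, one may assume both $A$ and $B$ are downward closed; a down-set that affinely generates $\Z_2^n$ must then contain $D_1^n = \{0, e_1, \dots, e_n\}$, for otherwise some coordinate vanishes on all of $A$ and $A$ lies in an affine hyperplane.

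The second and crucial step is to identify the correct subgroup along which to quotient. Let $H$ be the stabilizer of $A + B$ in $G$. Combining the size bound $|A| \leq \frac{t+1}{2^t}|G|$ with the affine-generation hypothesis, one argues via a Kneser-style counting that $|H| \geq 2^{n-t}$: intuitively, $A$ cannot simultaneously be spread enough to affinely generate $G$ and small enough to fit the stated bound, unless a large common periodicity already appears in $A + B$. This step requires a delicate interplay between the compressed form of $A$ (which contains $D_1^n$), Kneser's theorem applied to $A + B$, and the sharp hypothesis on $|A|$.

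Writing $\bar A, \bar B$ for the images in the quotient $G/H \cong \Z_2^t$, one has $|A + B| = |\bar A + \bar B| \cdot |H|$, since $A + B$ is a union of $H$-cosets. Up to an affine change of coordinates in $\Z_2^t$, the compressed form of $A$ ensures that $\bar A$ contains $D_1^t$, so $\bar A + \bar B \supseteq N_1(\bar B)$, the $1$-neighborhood in the Hamming graph on $\Z_2^t$. Harper's vertex-isoperimetric theorem in $\Z_2^t$ bounds $|N_1(\bar B)|$ below by the Kruskal--Katona expression for a Hamming ball with a partial top layer in colex order; this is exactly the piecewise-linear formula of the theorem, with $k$ and $w$ reading off the partial layer's size. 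Multiplying back by $|H| = 2^{n-t}$ produces the claimed bound, and equality at $w = 0$ is realized by the $H$-invariant lifts of the Hamming balls $D_1^t, D_k^t \subseteq \Z_2^t$. The main obstacle is the second step: the compression argument by itself only yields $A \supseteq D_1^n$, which via Harper's inequality in $\Z_2^n$ recovers the theorem only when $t = n$, so extracting the extra factor $2^{n-t}$ demands the Kneser-type structural analysis described above.
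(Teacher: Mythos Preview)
Your reduction step is fine as far as it goes, but the heart of your argument --- step 2, producing a large subgroup~$H$ --- does not work as described, and this is not merely a detail left to be filled in. You take $H$ to be the stabilizer of $A+B$ and assert that a ``Kneser-style counting'' forces $|H| \ge 2^{n-t}$. But consider $A = B = D_2^n$ for large $n$. This is already a down-set (stable under all your $\sigma_i$), it affinely generates, and for $|A| = 1 + n + \binom{n}{2}$ the parameter $t$ determined by the hypothesis is roughly $n - 2\log_2 n$, so $2^{n-t}$ is of polynomial size in $n$. Yet $A+B = D_4^n$, whose stabilizer is the trivial subgroup. So after only single-coordinate compressions there is simply no large periodicity to extract, and no appeal to Kneser can manufacture one. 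You seem aware of this, since you flag step~2 as ``the main obstacle,'' but the proposal does not actually overcome it.

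The paper proceeds quite differently on both of the points where your sketch is vague. First, it uses not just the $\sigma_i$ but \emph{all} compressions $C_I$ (for arbitrary $I \subseteq [n]$) that preserve the inclusion $E \subseteq A$; this much richer family forces a maximal subgroup $H = \langle e_1,\dots,e_h\rangle$ to lie \emph{inside} $A$ (not inside the stabilizer of $A+B$), with the key inequality $|A| \le (1 + m/2)|H|$ for $m = \operatorname{codim} H$, whence $m \le t-1$. Second, since $B$ is typically not $H$-periodic, one cannot simply quotient and apply Harper in $\Z_2^m$; instead the paper slices $B$ by $H$-cosets into level sets $C_j = \{H' : |B\cap H'| \ge j\}$, shows these form a family of down-sets satisfying an \emph{antichain condition} (any $C_i \setminus C_j$ is an antichain), and proves a new averaged isoperimetric inequality for such families. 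This replaces the single application of Harper you envisage, and it is where the bulk of the work lies.
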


The Freiman--Ruzsa theorem~\cite{ruzsa_thm} is a major result in additive combinatorics.
In the context of the above discussion, it addresses the special case $A=B$.
It states that if $A$ is a subset of an $r$-torsion abelian group with $|A+A| \leq K|A|$,
then $A$ is contained in a coset of cardinality at most $F(K)|A|$, with $F(K) = K^2 r^{K^4}$. 
The special case $r=2$ has received considerable 
attention~\cite{deshouillers_hennecart_plagne,diao,green_ruzsa,green_tao,hennecart_plagne,konyagin,lev_structure,sanders,viola}.
Among the most recent contributions is work by Green and Tao~\cite{green_tao} 
with further improvement by Konyagin~\cite{konyagin}.
It shows that one can take $F(K) = 2^{2K + O(\log K)}$.
Here we exactly determine the lowest possible value of $F(K)$ for $r=2$.

\begin{thm}\label{formulaF}
For $K \geq 1$, denote by $t \geq 1$ the unique integer for which:
$$ \frac{\binom{t}{2} + t + 1}{t + 1} \leq K < \frac{\binom{t+1}{2} + (t+1) + 1}{(t+1) + 1} $$
For $A \subseteq \Z_2^n$ such that $|A+A|/|A| \leq K$, we have $|\langle A \rangle|/|A| \leq F(K)$ where:
$$ F(K) = \begin{cases}
\frac{2^t}{\binom{t}{2} + t + 1} \cdot K \;\;\; & \frac{\binom{t}{2} + t + 1}{t + 1} \leq K < \frac{t^2 + t + 1}{2t} \\
\frac{2^{t+1}}{t^2 + t + 1} \cdot K      \;\;\; & \frac{t^2 + t + 1}{2t} \leq K < \frac{\binom{t+1}{2} + (t+1) + 1}{(t+1) + 1}
\end{cases} $$
This choice of $F(K)$ is tight, and grows as $\Theta(2^{2K}/K)$.
\end{thm}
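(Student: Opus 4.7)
The plan is to deduce Theorem~\ref{formulaF} by applying Theorem~\ref{AB} to the pair $(A,A)$ and then sharpening the resulting inequality using two specific Hamming-ball extremal families. After standard reductions---$\langle A\rangle = G = \Z_2^n$, and the case $|A|/|G|>3/4$ being trivial since then $|G|/|A|<4/3 \leq F(K)$---set $\alpha = |A|/|G|$ and let $t^\star \geq 2$ be the largest integer with $\alpha \leq (t^\star+1)/2^{t^\star}$. One checks that with $B = A$, the only admissible choice in Theorem~\ref{AB}'s parameterization is $k=1$, $w \in (-t^\star/(2(t^\star-1)),\, 0]$, yielding
$$ K\alpha \;\geq\; \frac{1+t^\star+\binom{t^\star}{2}+w\binom{t^\star-1}{2}}{2^{t^\star}}. $$
Eliminating $w$ gives a linear bound $F \leq 2^{t^\star-1}(2K - t^\star + 2)/(t^\star+2)$ with slope $2^{t^\star}/(t^\star+2)$. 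This slope is larger than both of the stated slopes $2^t/|D_2^t|$ and $2^{t+1}/(t^2+t+1)$, so Theorem~\ref{AB} alone is not tight and must be sharpened.

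The two linear pieces of $F(K)$ are realized by distinct extremal constructions. The first, of slope $2^t/|D_2^t|$, is $A = D_1^t \times V$ with $V \subseteq \Z_2^{n-t}$ a minimum-size subset satisfying $V+V = \Z_2^{n-t}$: its sumset $A+A = D_2^t \times \Z_2^{n-t}$ has fixed density $|D_2^t|/2^t$, so shrinking $|V|$ slides $(K,F)$ linearly along the line. The second, of slope $2^{t+1}/(t^2+t+1)$, is $A = A^\circ \times V$ where $A^\circ = (D_1^t \times \{0\}) \cup (D_1^{t-2} \times \{1\}) \subseteq \Z_2^{t+1}$; direct counting gives $|A^\circ| = 2t$ and $|A^\circ + A^\circ| = |D_2^t| + |D_1^t + D_1^{t-2}| = t^2+t+1$. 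The second construction first becomes available at $K = (t^2+t+1)/(2t)$ (corresponding to $V = \Z_2^{n-t-1}$), and at this value it already yields a strictly larger $F$ than the first piece extrapolated, explaining the transition point in the statement. The asymptotic $F(K) = \Theta(4^K/K)$ then follows by substituting $t \approx 2K$ (so that $|D_2^t| \sim 2K^2$) into the closed form.

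The main obstacle is the matching upper bound on $F(K)$, namely proving $|A+A|/|G| \geq |D_2^{t^\star}|/2^{t^\star}$ in the first sub-range of $\alpha$ and $|A+A|/|G| \geq (t^2+t+1)/2^{t+1}$ in the second---bounds strictly stronger than what a black-box use of Theorem~\ref{AB} gives. The natural route is to replay the compression argument underlying Theorem~\ref{AB} with the refined ansatz that the sumset-minimizing configurations are precisely $D_1^t \times V$ or $A^\circ \times V$; alternatively, one may combine Theorem~\ref{AB} for $(A,A)$ with further instances applied to carefully chosen subsets $A' \subseteq A$ of neighbouring Hamming-ball sizes, deriving additional inequalities and taking their envelope. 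Verifying that these two families exhaust the optimum in their respective ranges, and that the crossover indeed occurs at $(t^2+t+1)/(2t)$, is where the bulk of the bookkeeping lies.
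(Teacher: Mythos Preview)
Your proposal is not a proof: you explicitly identify the upper bound on $F(K)$ as ``the main obstacle'' and then offer only two vague strategies (``replay the compression argument\ldots\ with the refined ansatz'' or ``combine Theorem~\ref{AB} for $(A,A)$ with further instances applied to carefully chosen subsets''). Neither is carried out, and the second is unlikely to work as stated: passing to a subset $A'\subseteq A$ gives $|A'+A'|\le|A+A|$, so Theorem~\ref{AB} applied to $(A',A')$ produces a \emph{weaker} lower bound, not a stronger one; there is no obvious envelope to take. Your computation that Theorem~\ref{AB} with $B=A$ yields only the slope $2^{t}/(t+2)$ (too large) is correct, and this is precisely why the paper does \emph{not} deduce Theorem~\ref{formulaF} from Theorem~\ref{AB}.

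The paper's route is quite different and supplies exactly the missing idea. After reducing to an $\langle\langle E\rangle\rangle$-compressed set $A$ and invoking the structure Lemma~\ref{structure} (so $A=H\cup A_1\cup\cdots\cup A_m$ with $a_i=|A_i|$ an initial segment), one writes
\[
|A+A|=(m+1)|H|+\sum_{i<j}a_i\circ a_j,
\]
and the problem becomes: over all compressed $m$-partitions $a_1\ldots a_m$ of $|A|-|H|$, minimize $\sum_{i<j}a_i\circ a_j$. Two combinatorial lemmas (the minimizer is quasi-dyadic, then quasi-fair) pin this down exactly, and a final induction shows the optimum occurs at $m=t-1$. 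This partition analysis via the Hopf--Stiefel function is the genuine content you are missing; it is what produces the two slopes $2^t/\bigl(\binom{t}{2}+t+1\bigr)$ and $2^{t+1}/(t^2+t+1)$ and the crossover at $(t^2+t+1)/(2t)$. Your extremal constructions are correct (your $A^\circ$ is the paper's $A_{[t,2]}$ up to relabeling), but the lower-bound side needs these partition lemmas, not Theorem~\ref{AB}.
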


{\em Compression} is an important tool from extremal set theory.
Much progress in the application of compression to additive problems 
was made by Bollob\'{a}s and Leader in~\cite{bollobas_leader},
and it is a key ingredient in Green and Tao's proof in~\cite{green_tao}.
There is a whole range of compression operators $C$, 
that transform an arbitrary set $A$ to another set $C(A)$, with $|C(A)|=|A|$ and $|C(A)+C(A)| \leq |A+A|$.
By a finite sequence of such compressions, 
it is possible to reduce to the case where $A$ is {\em compressed} in some appropriate sense,
and hence has certain structural properties, which make $A+A$ easier to study.
The difficulty is that $C(A)$ need not be affinely generating even if $A$ is. 
Green and Tao handled this difficulty by restricting the types of compression operators they used. 
Our approach is different. We employ more types of compression operators 
and we proceed as long as possible without jeopardizing affine generation, i.e., as long as
$\left\langle C(A) \right\rangle = \left\langle A \right\rangle$.

{\em Isoperimetric inequalities} play an important role in our work. 
In our investigations of $A+B$,
we prove a new variant of the isoperimetric inequality for the hypercube.

\textbf{Overview:}
In Section~\ref{toolsect} we discuss compressions and other useful tools.
We explore the key notion of compression that maintains affine generation.
In Section~\ref{freisect} Theorem~\ref{formulaF} is proved,
first in an asymptotic form, then with the exact expression.
In Section~\ref{diffsect} we establish Theorem~\ref{AB}.
The proof utilizes our new isoperimetric inequality.

\section{Tools}\label{toolsect}

In this section we briefly survey several concepts and results that are used below.
These include the lexicographic order and the Hopf--Stiefel function.
Then we discuss compressions in $\Z_2^n$, in line with Section $2$ of~\cite{green_tao},
and we introduce the study of compressions that preserve affine generation.

\subsection{The Lexicographic Order}

Throughout, we use the linear basis $\{e_1,e_2,...,e_n\}$ for $\Z_2^n$. 
Elements $x \in \Z_2^n$ are expressed as $x = \sum_{i=1}^n x_i e_i$.
The correspondence between vectors $x \in \Z_2^n$ 
and their supports $\{j\;|\;x_j=1\} \subseteq \{1,...,n\} = [n]$, 
is used to simplify certain notation and arguments.

The \emph{lexicographic order} is a total order on $\Z_2^n$. For $x,y \in \Z_2^n$ we say that $x \prec y$,
if $x_i < y_i$ for the largest coordinate $i$ for which $x_i \neq y_i$.
For example, the ordering of $\Z_2^3$ is: 
$$0 \;\prec\; e_1 \;\prec\; e_2 \;\prec\; e_1+e_2 \;\prec\; 
e_3 \;\prec\; e_1+e_3 \;\prec\; e_2+e_3 \;\prec\; e_1+e_2+e_3$$

The \emph{height}, $\hbar(x)$ of an element $x$ in a finite totally ordered set
is $x$'s place in that order. For a set of elements $A$ we denote 
$\hbar(A) = \sum_{x \in A}\hbar(x)$.

If $T \subseteq \Z_2^n$, then its \emph{initial segment} of size $a$, denoted $IS(a,T)$,
is the set of the $a$ smallest elements of $T$ in the lexicographic order.
We use the abbreviation $IS(a) = IS(a,\Z_2^n)$ for $n \in \N$ large enough.

\subsection{The Hopf--Stiefel Function}

For the reader's convenience we prove the following observation of Bollob\'{a}s and Leader~\cite{bollobas_leader}.

\begin{prop}\label{segs}
For two initial segments $IS(a),IS(b) \subseteq \Z_2^n$, 
the sum $IS(a) + IS(b)$ is an initial segment as well.
\end{prop}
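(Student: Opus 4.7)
The plan is to verify that $IS(a) + IS(b)$ is a down-set under the lexicographic order: if $z$ lies in it and $z' \prec z$, then $z'$ lies in it as well. Combined with the fact that $0 = 0 + 0 \in IS(a) + IS(b)$ whenever $a,b \geq 1$ (the case $a = 0$ or $b = 0$ being trivial), and with the observation that any nonempty down-set in the finite chain $(\Z_2^n, \prec)$ is automatically an initial segment, this will suffice.

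To carry out the down-set verification, I would fix a decomposition $z = x + y$ with $x \in IS(a)$, $y \in IS(b)$, and an element $z' \prec z$. Let $k$ be the largest coordinate on which $z'$ and $z$ differ; by the definition of $\prec$ we have $z_k = 1$ and $z'_k = 0$. Since addition in $\Z_2^n$ is coordinatewise XOR, $z_k = x_k + y_k = 1$ forces exactly one of $x_k, y_k$ to equal $1$, so up to swapping the roles of the two summands we may assume $x_k = 1$. Now set $x' = x + (z + z')$ and $y' = y$, so that $x' + y' = z'$. The vector $z + z'$ vanishes on every coordinate above $k$ and equals $1$ at coordinate $k$, so $x'$ agrees with $x$ above $k$ while $x'_k = 0 < 1 = x_k$. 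Consequently $x' \prec x$, and since $IS(a)$ is an initial segment containing $x$, it must also contain $x'$. Combined with $y' = y \in IS(b)$, this gives $z' = x' + y' \in IS(a) + IS(b)$, as required.

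I do not expect a serious obstacle: the argument is entirely direct. The one subtlety worth naming is that one cannot simply ``lower'' $x$ in isolation in order to decrease $z$; rather, $x$ must be modified by exactly the bit-pattern $z + z'$, and the point of the argument is that this particular modification is lex-decreasing because its top-most nonzero coordinate, at position $k$, flips a $1$ of $x$ to a $0$.
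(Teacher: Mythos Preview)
Your proof is correct and follows the same strategy as the paper's: show that $IS(a)+IS(b)$ is downward closed in the lexicographic order by exhibiting, for any $z' \prec z = x+y$, a decomposition $z' = x'+y'$ with $x' \preceq x$ and $y' \preceq y$. The only difference is that the paper carries this out by induction on the highest set bit of $x$ or $y$, while you do it in one stroke by loading the entire correction $z+z'$ onto whichever summand has a $1$ in the top differing coordinate; both arguments are valid.
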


\begin{proof}
For $z \prec x+y$, we claim that $z = x'+y'$ for some $x' \preceq x$ and $y' \preceq y$.
Let $i \in \N$ be largest index such that $x_i=1$ or $y_i=1$. Say $x_i=1$.
If $z_i=0$, then clearly $z \prec x$, so we can take $x'=z$ and $y'=0$.
If $z_i=1$, then note that $(z-e_i) \prec (x-e_i) + y$.
By induction on $i$, obtain $(z-e_i) = x'' + y''$ for $x'' \preceq (x-e_i)$ and $y'' \preceq y$,
and choose $x' = x'' + e_i$ and $y' = y''$.
\end{proof}

The Hopf--Stiefel binary function $a \circ b$ can be defined on $\N \times \N$ as follows:
$$ a \circ b = |IS(a) + IS(b)|$$
Proposition~\ref{segs} can be restated as: $IS(a) + IS(b) = IS(a \circ b)$. 
This definition is relevant for us for the following reason. The cardinality of a sumset
of two sets of given cardinalities is minimized by taking the two sets to be initial segments:
$$ a \circ b = \min \left\{ |A+B| \;\bigg{|}\; A,B \in \Z_2^n,\; |A|=a,\; |B|=b \right\} $$
Note that here the sets are not required to be affinely generating.
This result can be deduced by the technique of compressions as we discuss below. See Lemma~\ref{sumcomp}.  

In particular, taking $A = IS(a)$ and $B = IS(b_1) \cup \left(e_n + IS(b_2)\right)$ for $n$ large enough,
one can verify the sub-distributive law:
$$ a \circ (b_1 + b_2) \leq a \circ b_1 + a \circ b_2 $$
Similarly, one can deduce the recursive relations for $a,b \leq 2^n$:
\begin{align*}
a \circ (2^n + b) = 2^n + a \circ b \\
(2^n + a) \circ (2^n + b) = 2^{n+1}
\end{align*}
These two formulas can be taken as an alternative definition of the Hopf--Stiefel function~\cite{pfister}.  

The function first arose in works of Hopf~\cite{hopf} and Stiefel~\cite{stiefel}.
They used tools from algebraic topology to prove that 
$a \circ b$ provides a lower bound for solutions of the Hurwitz problem, 
concerning real quadratic forms (see~\cite{shapiro}).
The relation to set addition in $\Z_2^n$ was given by Yuzvinsky~\cite{yuzvinsky}. As it
turns out, the Hopf--Stiefel function arises in the study of several more problems in various contexts.
There is also a base-$p$ analog of the this function for $p>2$, see~\cite{eliahou_kervaire_vector_spaces}. 
For a survey, see~\cite{eliahou_kervaire_hs}.

\subsection{Compressions}

For $I = \{i_1,i_2,...\} \subseteq [n]$, 
denote $H_I = \left\langle 0,e_{i_1},e_{i_2},...\right\rangle \triangleleft \Z_2^n$.
As usual, if $H$ is a subgroup of $G$, we denote by $G/H$ the collection of all $H$-cosets in $G$.
The \emph{$I$-compression} of a subset $A \subseteq \Z_2^n$ is defined by: 
$$C_I(A) = \bigcup_{T \in \Z_2^n/H_I} IS\left(|A \cap T|, T\right) $$
In words, in every $H_I$-coset $T$ we replace the elements of $A \cap T$
by a same-cardinality initial segment, with respect to the lexicographic order.
We say $A$ is compressed with respect to $I$, or \emph{$I$-compressed}, if $C_I(A)=A$.
In particular, lexicographic initial segments of $\Z_2^n$ are exactly all $[n]$-compressed sets. 

\begin{ex}\label{lost_affine}
$C_{\{1,2,3\}}(\{0,e_1,e_2,e_3,e_4\}) = \{0,e_1,e_2,e_1+e_2,e_4\}.$
\end{ex}

This notion of compression is closely related to 
the operation bearing the same name from extremal set theory (see, e.g.,~\cite{frankl_survey}). 
A subset of $\Z_2^n$ naturally corresponds to a family, a.k.a. set-system, $\cal F$ of subsets of $[n]$. 
We freely move between these terminologies if no confusion can occur.
An $\{i\}$-compression corresponds to the \emph{push-down operator} $T_i$, 
which replaces $J \in \cal F$ by $J \setminus \{i\}$ provided that $J \setminus \{i\} \not \in \cal F$.
If $\cal F$ is $\{i\}$-compressed for each $i$, 
then it is closed under taking subsets and is called a \emph{downset}.
The \emph{shift operator} $S_{ij}$ replaces $j$ by $i$ wherever possible. 
Namely, for every $J$ with $i,j \not \in J$ it replaces $J \cup \{j\}$ by $J \cup \{i\}$
given that the former belongs to $\cal F$ and the latter doesn't.
We say that $\cal F$ is \emph{shift-minimal} if it is invariant to all shifts $S_{ij}$ where $i<j$.
One can check that being $\{i,j\}$-compressed for all $i,j \in [n]$ 
corresponds to being a shift-minimal downset.

Compression can simplify matters substantially,
while preserving several useful features of the set-system.
Here are some observations about compressions. 
These and others are found in~\cite{green_tao}. The
proofs are straightforward, working coset by coset.

\begin{lemma}[Properties of compressions]\label{comp}
Suppose $A \subseteq \Z_2^n$ and $I \subseteq [n]$.
\begin{itemize}
\item[(1)] $|C_I(A)| = |A|$.
\item[(2)] $C_I(A)$ is $I$-compressed.
\item[(3)] $\hbar(C_I(A)) \leq \hbar(A)$ with equality iff $A$ is $I$-compressed.
\item[(4)] An $I$-compressed set is $J$-compressed for all $J \subseteq I$.
\item[(5)] $C_I(A) \subseteq C_I(B)$ for all $A \subseteq B$. \hfill \qed
\end{itemize}
\end{lemma}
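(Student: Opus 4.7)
The overall plan is to exploit the fact that $C_I$ is defined coset by coset: on each $H_I$-coset $T \in \Z_2^n/H_I$, the operator simply replaces $A \cap T$ by $IS(|A \cap T|, T)$. Consequently every item can be verified within a single coset and then summed or unioned over the disjoint family $\Z_2^n / H_I$. Parts (1), (2), and (5) then amount to bookkeeping. For (1), summing $|C_I(A) \cap T| = |A \cap T|$ over all cosets gives $|C_I(A)| = |A|$. For (2), $C_I(A) \cap T$ is by construction an initial segment of $T$, so a second application of $C_I$ leaves it fixed. For (5), $A \subseteq B$ implies $|A \cap T| \leq |B \cap T|$, whence $IS(|A \cap T|, T) \subseteq IS(|B \cap T|, T)$ by nestedness of initial segments; unioning over $T$ yields $C_I(A) \subseteq C_I(B)$.

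For (3), the plan is first to prove a single-coset minimization: among all $S \subseteq T$ with $|S| = s$, the height sum $\sum_{x \in S} \hbar(x)$ is minimized precisely when $S = IS(s, T)$. This is clear, since swapping an element of $S$ for a strictly smaller element of $T \setminus S$ strictly decreases the sum, so the minimum is attained uniquely by the $s$ smallest elements of $T$. Summing the resulting inequality $\sum_{x \in C_I(A) \cap T} \hbar(x) \leq \sum_{x \in A \cap T} \hbar(x)$ across all $H_I$-cosets proves $\hbar(C_I(A)) \leq \hbar(A)$, with equality iff $A \cap T = IS(|A \cap T|, T)$ for every $T$, i.e.\ iff $A = C_I(A)$.

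The main obstacle is (4), which is the only item that genuinely mixes two different coset structures. Given $J \subseteq I$, each $H_J$-coset $T'$ is contained in a unique $H_I$-coset $T$, and the task is to show that if $A \cap T$ is a lexicographic initial segment of $T$, then $A \cap T'$ is a lexicographic initial segment of $T'$. The key structural fact to establish is: whenever $S$ is a lex initial segment of $T$ and $T' \subseteq T$ is an $H_J$-coset, $S \cap T'$ is a lex initial segment of $T'$. This follows because the lex order on $T$ restricts to the lex order on $T'$; if some element $y \in T' \setminus S$ were smaller than some element $x \in S \cap T'$, then $y$ would be a strictly earlier element of $T$ lying outside $S$, contradicting that $S$ is an initial segment of $T$. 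Once this lemma is in hand, (4) follows immediately by applying it to each pair $T' \subseteq T$.
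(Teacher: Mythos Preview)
Your proof is correct and follows exactly the route the paper indicates: the paper does not spell out any details, merely stating that ``the proofs are straightforward, working coset by coset,'' which is precisely the decomposition you carry out. Your treatment of (4) via the observation that an initial segment of an $H_I$-coset intersects each $H_J$-subcoset in an initial segment is the natural way to fill in that step.
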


Compressions behave well on sumsets.
By Proposition~\ref{segs}, one can deduce that the sum of two $I$-compressed subsets is $I$-compressed too.
The following well-known lemma deals with the compression of a sum of two general subsets.
For the sake of completeness, we prove it here, following~\cite{bollobas_leader} and~\cite{green_tao}.

\begin{lemma}[Sumset compression]\label{sumcomp}
Suppose $A,B \subseteq \Z_2^n$ and $I \subseteq [n]$. Then
$$ C_I(A)+C_I(B) \subseteq C_I(A+B) .$$
Consequently $|C_I(A)+C_I(B)| \leq |A+B|$.
\end{lemma}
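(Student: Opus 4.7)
The plan is to verify the set containment coset-by-coset with respect to the subgroup $H_I$. Since $H_I$ is a subgroup of $\Z_2^n$, the sum of any two $H_I$-cosets $T_1, T_2$ is another $H_I$-coset, which I call $S$. Choosing the representative of each coset $T$ in the complementary subgroup $H_{[n]\setminus I}$ (i.e.\ supported outside $I$) identifies $T$ with $H_I \cong \Z_2^{|I|}$ in a way that matches the induced lexicographic order on $T$ with the lex order on $H_I$: two elements of $T$ agree on coordinates outside $I$, and on coordinates in $I$ the representative contributes $0$, so the comparison reduces to the lex order on $H_I$. Under this identification $IS(a,T)$ corresponds to the standard $IS(a)$ in $H_I$, and Proposition~\ref{segs} transfers verbatim from $H_I$ to every coset.

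Fix an $H_I$-coset $S$. Both sides of the desired inclusion, intersected with $S$, read
\[
\bigl(C_I(A)+C_I(B)\bigr)\cap S \;=\; \bigcup_{T_1+T_2=S}\bigl(C_I(A)\cap T_1\bigr)+\bigl(C_I(B)\cap T_2\bigr), \qquad C_I(A+B)\cap S \;=\; IS\bigl(|(A+B)\cap S|,\,S\bigr).
\]
Each summand on the left is $IS(|A\cap T_1|,T_1)+IS(|B\cap T_2|,T_2)$, which by Proposition~\ref{segs} (transferred to $S$) is an initial segment of $S$. A union of initial segments in a totally ordered set is itself an initial segment (the largest one), so the entire left-hand side is an initial segment of $S$. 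It therefore suffices to bound the cardinality of each summand by $|(A+B)\cap S|$, and since $(A+B)\cap S \supseteq (A\cap T_1)+(B\cap T_2)$, the task further reduces to the following sub-lemma inside the ambient cube: for all $X,Y \subseteq \Z_2^m$,
\[
|IS(|X|)+IS(|Y|)| \;\leq\; |X+Y|,
\]
equivalently $|X|\circ|Y| \leq |X+Y|$; that is, initial segments minimize sumset cardinality among sets of prescribed sizes in $\Z_2^m$.

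I would establish this sub-lemma by induction on $m$: split $\Z_2^m$ into its two $\Z_2^{m-1}$-halves via the last coordinate, decompose $X, Y$ correspondingly into pieces $X_0,X_1,Y_0,Y_1 \subseteq \Z_2^{m-1}$, apply the inductive hypothesis to each of the four sub-sumsets $X_i+Y_j$, and combine the resulting bounds using the sub-distributive inequality together with the recursive identities $a\circ(2^{m-1}+b)=2^{m-1}+a\circ b$ and $(2^{m-1}+a)\circ(2^{m-1}+b)=2^m$ recorded just after Proposition~\ref{segs}. This sub-lemma is the one substantive step; everything else is coset bookkeeping. The consequence $|C_I(A)+C_I(B)|\leq|A+B|$ then follows at once from the proved containment combined with $|C_I(A+B)|=|A+B|$ from Lemma~\ref{comp}(1). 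The main obstacle is the sub-lemma, where one must exploit the Hopf--Stiefel recursive structure carefully to make the inductive step tight.
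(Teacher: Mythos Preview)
Your coset-by-coset reduction is correct and in fact coincides with the second half of the paper's own proof (the passage ``the case $n>|I|$ is implied by the case $n=|I|$''). The divergence is entirely in how the core case $I=[n]$ is handled, which in your formulation is exactly the sub-lemma $|X|\circ|Y|\le |X+Y|$ for arbitrary $X,Y\subseteq\Z_2^m$.

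The paper treats this core case by a double induction on $|I|$ and on $\hbar(A)+\hbar(B)$: if either set fails to be $J$-compressed for some proper $J\subsetneq I$, one applies $C_J$ first and invokes both hypotheses. This leaves only sets that are $J$-compressed for every proper $J$; such sets are shown to be either initial segments or one explicit exceptional set $S=H_{[n-1]}\setminus\{e_1+\cdots+e_{n-1}\}\cup\{e_n\}$, and a four-case check finishes.

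Your proposed route---straight induction on $m$ via the last-coordinate split---is genuinely different, but the step you flag as ``the main obstacle'' is under-specified. From the four inductive bounds $|X_i+Y_j|\ge a_i\circ b_j$ one only extracts
\[
|X+Y|\;\ge\;\max(a_0\circ b_0,\,a_1\circ b_1)+\max(a_0\circ b_1,\,a_1\circ b_0),
\]
so you would need the purely arithmetic inequality $(a_0{+}a_1)\circ(b_0{+}b_1)\le\max(a_0\circ b_0,a_1\circ b_1)+\max(a_0\circ b_1,a_1\circ b_0)$ for all splittings. Sub-distributivity gives four terms rather than two, and the recursive identities only bite when an argument exceeds $2^{m-1}$; when both $a$ and $b$ are at most $2^{m-1}$ neither tool applies directly. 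The inequality does seem to hold, but establishing it is essentially as hard as the original statement. The cleanest way to close the gap is to first prove the $|I|=1$ case of the compression lemma directly (an easy inclusion--exclusion check on the two halves) and iterate---which is precisely the mechanism behind the paper's $\hbar$-induction. So your outline is sound, but the ``combine the resulting bounds'' step, as written, is a real gap rather than routine bookkeeping.
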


\begin{proof}
We use a double induction, on $|I|$ and on $\hbar(A)+\hbar(B)$.
For the induction step, suppose that for some $J \subsetneq I$ either $A$ or $B$ is not $J$-compressed. In this case
$$ C_I(A) + C_I(B) = C_I(C_J(A)) + C_I(C_J(B)) \subseteq C_I(C_J(A) + C_J(B)) \subseteq C_I(C_J(A+B)) = C_I(A + B)$$
Both inclusions are by the induction hypothesis:
the first one since $\hbar(C_J(A))+\hbar(C_J(B)) < \hbar(A)+\hbar(B)$ by property (3) of Lemma~\ref{comp},
and the second one since $|J| < |I|$ and by property (5).
The equalities are by property (4).

It only remains to verify the lemma for $A$ and $B$ that are both $J$-compressed for all $J \subsetneq I$.
We start with the simpler case $n = |I|$.

What are the subsets of $G = \Z_2^n$ that are $J$-compressed for all $J \subsetneq [n]$? 
By property (4), all initial segments are such.
If $S \subseteq G$ is not an initial segment, 
then necessarily $x \notin S$ and $y \in S$ for some consecutive $x \prec y$.
The only consecutive pair in $G$ that is not contained in a proper $H_J$-coset is
$(e_1 + ... + e_{n-1}) \prec e_n$. 
One can verify, for example by $S$ being $[2...n]$-compressed, 
that the only such set is $S = H_{[n-1]} \setminus \{e_1 + ... + e_{n-1}\} \cup \{e_n\}$. 
In conclusion, it is enough to check the case where $A$ and $B$ are initial segments or equal to $S$.
Now there are four cases to consider:
\begin{enumerate}
\item 
If both $A$ and $B$ are initial segments, 
then by Proposition~\ref{segs} $A+B$ is an initial segment too. \\
$\Rightarrow C_I(A) + C_I(B) = A+B = C_I(A+B)$
\item 
If $A=B=S$, 
then note that $|S| \leq |S+S|$ and $C_I(S) = H_{[n-1]}$. \\
$\Rightarrow C_I(S) + C_I(S) = C_I(S) \subseteq C_I(S+S)$
\item 
If $B=S$ and $A$ is an initial segment with $|A| \leq |S|$, 
then $A = C_I(A) \subseteq C_I(S) = H_{[n-1]}$. \\
$\Rightarrow C_I(A) + C_I(S) = C_I(S) \subseteq C_I(A+S)$
\item
If $B=S$ and $A$ is an initial segment with $|A| > |S|$ then $|A|+|S| > |G|$. \\
This means $A+S=G$, as the reader may verify by a standard pigeonhole argument. \\
$\Rightarrow C_I(A) + C_I(S) = G = C_I(G) = C_I(A+S)$
\end{enumerate}
The case $n > |I|$ is implied by the case $n = |I|$:
\begin{align*}
C_I(A) + C_I(B) &= \bigcup\limits_{H_c \in G/H_I} \left(\left(C_I(A) + C_I(B)\right) \cap H_c\right) \\
&= \bigcup\limits_{H_c \in G/H_I} \bigcup\limits_{H_a+H_b=H_c} \left(\left(C_I(A) \cap H_a\right) + \left(C_I(B) \cap H_b\right)\right) \\
&= \bigcup\limits_{H_c \in G/H_I} \bigcup\limits_{H_a+H_b=H_c} \left(C_I\left(A \cap H_a\right) + C_I\left(B \cap H_b\right)\right) \\
&\subseteq \bigcup\limits_{H_c \in G/H_I} \bigcup\limits_{H_a+H_b=H_c} C_I\left(\left(A \cap H_a\right) + \left(B \cap H_b\right)\right) \\
&\subseteq \bigcup\limits_{H_c \in G/H_I} C_I\left(\bigcup\limits_{H_a+H_b=H_c} \left(\left(A \cap H_a\right) + \left(B \cap H_b\right)\right)\right) \\
&= \bigcup\limits_{H_c \in G/H_I} C_I\left((A+B) \cap H_c\right) \\
&= \bigcup\limits_{H_c \in G/H_I} \left(C_I(A+B) \cap H_c\right) \\
&= C_I(A+B) .
\end{align*}
The first and second inequalities are simply dividing into cases, 
according to the involved $H_I$-cosets.
The third one holds because compressions work coset-wise.
Then there is inclusion by the assumption on the case $I = [n]$,
applied to our $H_I$ and translated to the relevant $H_I$-cosets.
And then, inclusion of initial segments,
because the union is at least as large as each of its components.
The three remaining equalities are similar to the first three.
\end{proof}

\subsection{Compressions that Preserve Affine Generation}

As Lemma \ref{sumcomp} shows, in the problems we consider here, 
compressing the sets under consideration can only improve our objective function. 
However, we are restricting ourselves to affinely generating sets and compression may destroy this property
(e.g., Example \ref{lost_affine}). 
Therefore, our strategy is to keep compressing as long as affine generation is maintained. 
To this end we introduce the following definition.

Suppose that $A \supseteq E$, where $E = \{0,e_1,e_2,...,e_n\}$ is the standard affine basis of $\Z_2^n$. 
If $A$ is $I$-compressed for every $I$ such that $C_I(A) \supseteq E$, 
we say that $A$ is $\langle\langle E \rangle\rangle$-\emph{compressed}.
Note that by part (3) of Lemma~\ref{comp} every set $A$ containing $E$, 
can be turned into an $\langle\langle E \rangle\rangle$-compressed set by a finite sequence of such compressions.
It turns out that $\langle\langle E \rangle\rangle$-compressed sets are very structured.

\begin{lemma}[Structure of $\langle\langle E \rangle\rangle$-compressed sets]\label{structure}
Let $A \subseteq \Z_2^n$ be an $\langle\langle E \rangle\rangle$-compressed set.
\begin{itemize}
\item[(1)] 
$A$ is a shift-minimal downset.
\item[(2)] 
$A$ contains a subgroup of maximal size $H \triangleleft \Z_2^n$ 
of the form $H = \left\langle 0,e_1,...,e_h \right\rangle$.
\item[(3)] 
$A$ is $\{1,...,h,h+i\}$-compressed for every $1 \leq i \leq m = \mathrm{codim}\;H$.
\item[(4)] 
$A \subseteq H+E$, i.e. $A = H \cup A_1 \cup A_2 \cup ... \cup A_m$ where $A_i = A \cap (e_{h+i} + H)$.
\item[(5)] 
For $1 \leq i \leq m$, $0 < |A_i| < |H|$.
\item[(6)] 
For $1 \leq i < j \leq m$, $|A_i| + |A_j| \leq |H|$.
\item[(7)] 
If $m > 1$, then $|A| \leq \left(1 + \frac{m}{2}\right)|H|$.
\end{itemize}
\end{lemma}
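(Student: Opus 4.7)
The plan is to prove the seven claims as a cascade: (1)--(4) set up the combinatorial structure of $A$, and (5)--(7) extract quantitative bounds from that structure. The recurring pattern is that every claim of the form ``$A$ is $I$-compressed'' comes for free from $\langle\langle E\rangle\rangle$-compressedness, provided one verifies $C_I(A)\supseteq E$, which amounts to counting lex-positions inside the coset $H_I$.

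For (1), I would check directly that $C_{\{i\}}(A)\supseteq E$ and $C_{\{i,j\}}(A)\supseteq E$: in each relevant coset either some basis vector is the lex-minimum, or $A$ already hits the coset in enough elements to keep $E$ inside the initial segment. This yields the downset and shift-minimality. For (2), define $h$ as the largest integer with $e_1+\cdots+e_h\in A$; the downset property then gives $H=\langle e_1,\ldots,e_h\rangle\subseteq A$, and maximality is built into the construction. For (3), $C_{I_i}(A)\supseteq E$ with $I_i=\{1,\ldots,h,h+i\}$ holds because $e_{h+i}$ sits at lex-position $|H|+1$ in $H_{I_i}$, while $A$ meets $H_{I_i}$ in at least $|H|+1$ elements, namely $H\cup\{e_{h+i}\}$.

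The crux is (4). If some $a\in A$ had two support indices $h<j_1<j_2$, the downset reduction would give $e_{j_1}+e_{j_2}\in A$. This element sits at lex-position $|H|+1$ inside its coset $e_{j_1}+H_{\{1,\ldots,h,j_2\}}$, so by (3) the entire first half $e_{j_1}+H$ lies in $A$; in particular $e_{j_1}+e_1+\cdots+e_h\in A$, and the shift $S_{h+1,j_1}$ (or the same element directly, when $j_1=h+1$) places $e_1+\cdots+e_{h+1}$ in $A$, contradicting the maximality of $h$. For (5), $e_{h+i}\in A_i$ gives $|A_i|\geq 1$, while $|A_i|=|H|$ would make $e_{h+i}+H\subseteq A$, and the shift $S_{h+1,h+i}$ would promote this to $e_{h+1}+H\subseteq A$, again contradicting maximality.

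Claim (6) is the subtlest quantitative step: assuming $|A_i|+|A_j|\geq|H|+1$ for some $i<j$, I would check that $C_I(A)\supseteq E$ for $I=\{1,\ldots,h,h+i,h+j\}$, since $|A\cap H_I|=|H|+|A_i|+|A_j|\geq 2|H|+1$ is large enough to include $e_{h+j}$ at lex-position $2|H|+1$ in $H_I$. So $A$ must be $I$-compressed, but an initial segment of that size in $H_I$ swallows the full block $e_{h+i}+H$, contradicting $|A_i|<|H|$. Finally, (7) is double counting: summing (6) over the $\binom{m}{2}$ pairs gives $(m-1)\sum_i|A_i|\leq\binom{m}{2}|H|$, whence $|A|=|H|+\sum_i|A_i|\leq(1+m/2)|H|$. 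The principal obstacle is the lex-position bookkeeping common to (3), (4), and (6): one must know that within any coset $x+H_{I'}$ of interest the first $|H|$ positions coincide with $x+H$, and that the next landmark position aligns with the specific basis vector being tracked. Once that alignment is in hand, each step collapses into a one-line contradiction.
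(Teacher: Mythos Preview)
Your outline is sound and close to the paper's proof. Two differences and one small gap are worth noting.

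For (2), the paper defines $h$ as the maximum dimension of any subgroup contained in $A$, then invokes a separate lemma (every $h$-dimensional subgroup of $\Z_2^n$ contains a vector of Hamming weight $\ge h$) together with shift-minimality to place the standard $H=\langle e_1,\dots,e_h\rangle$ inside $A$. Your route---defining $h$ via $e_1+\cdots+e_h\in A$ directly---is slicker, and your subsequent contradictions in (4) and (5) only need maximality of your integer $h$, not of $H$ as a subgroup. But the statement of (2) asserts that $H$ is maximal among \emph{all} subgroups of $A$, and ``maximality is built into the construction'' does not supply this. You would still need the paper's auxiliary lemma, or else deduce it after the fact from (4)--(5): any subgroup $H'\subseteq A\subseteq H+E$ projects to a subgroup of $G/H$ lying inside $\{0,e_{h+1}+H,\dots,e_{h+m}+H\}$, hence of order at most $2$, and in the order-$2$ case the fiber bound $|H'|/2\le |A_i|<|H|$ forces $|H'|\le |H|$.

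For (7), your double-counting of (6) over the $\binom{m}{2}$ pairs is neater than the paper's case split on whether some $|A_i|>|H|/2$.

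One bookkeeping omission in (3) and (6): you track the lex-position of $e_{h+i}$ (and $e_{h+j}$) inside the principal coset $H_I$, but $C_I(A)\supseteq E$ also requires retaining $e_{h+k}$ for the remaining indices $k$. Each such $e_{h+k}$ is the lex-minimum of its own $H_I$-coset, so a single element of $A$ there (namely $e_{h+k}$ itself) suffices; this is routine but should be said. The paper handles it explicitly by noting that $\{e_{h+k}\}$ is an initial segment of its coset.
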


\begin{proof} 
The proofs are fairly straightforward.
\begin{itemize}
\item[(1)]
It is a simple observation that both $\{i\}$-compressions and $\{i,j\}$-compressions preserve $E \subseteq A$.
Hence $A$ must already be compressed with respect to these sets, i.e., a shift-minimal downset.
\item[(2)]
Let $h$ be the maximal dimension of a subgroup contained in $A$. As shown below in Lemma~\ref{at_least_h},
a subgroup of dimension $h$ must contain an element of Hamming weight at least $h$. 
By shift-minimality $e_1 + e_2 + ... + e_h \in A$, and by the downset property
$H = \left\langle 0,e_1,...,e_h \right\rangle \subseteq A$.
\item[(3)]
Denote $I = \{1,...,h,h+i\}$.
The sets $H \cup \{e_{h+i}\}$ and $\{e_{h+j}\}$ for $j \neq i$ are initial segments of their $H_I$-cosets.
These sets cover $E$ and remain included in $A$ through the $I$-compression.
\item[(4)] 
By the downset property, it is sufficient to show $e_{h+i} + e_{h+j} \notin A$ for each $1 \leq i < j \leq m$.
Indeed, if $A$ contains $e_{h+i} + e_{h+j}$ then it contains $e_{h+i} + H$ by being $\{1,...,h,h+j\}$-compressed.
This implies $H \cup (e_{h+i}+H) \subseteq A$,
contrary to the maximality of the subgroup $H$ in $A$.
\item[(5)] 
For the lower bound note that $e_{h+i} \in E \subseteq A$.
On the other hand, if $|A_i| = |H|$ then $H \cup (e_{h+i}+H) \subseteq A$, contrary, again, to the maximality of $H$.
\item[(6)]
Note that $e_{h+j} \in A$,
while some lexicographically smaller elements in $e_{h+i} + H$ are not contained in $A$.
Therefore $A$ can't be $I$-compressed for $I = \{1,...,h,h+i,h+j\}$.
Since it is $\langle\langle E \rangle\rangle$-compressed, this means $e_{h+j} \notin C_I(A)$.
Equivalently, $|A \cap H_I| \leq 2|H|$, which leads to our claim.
\item[(7)]
If $|A_i| \leq \frac{1}{2}|H|$ for every $i$, 
clearly $|A|  = |H| + \sum_{i=1}^m|A_i| \leq \left(1 + \frac{m}{2}\right)|H|$.
Otherwise, $|A_i| > \frac{1}{2}|H|$ for some $i$, 
thus $|A_j| \leq |H| - |A_i| < \frac{1}{2}|H|$ for every $j \neq i$.
So $|A_i| + |A_j| \leq |H|$ for some $i$ and $j$,
and the remaining $A_j$'s are no bigger than $\frac{1}{2}|H|$.
\end{itemize}
\end{proof}

\begin{lemma}\label{at_least_h}
Let $H$ be an $h$-dimensional subgroup of $\Z_2^n$. 
Then $H$ contains an element of Hamming weight at least $h$.
\end{lemma}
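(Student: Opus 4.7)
The plan is to reduce this to a standard linear algebra fact over $\F_2$: every subgroup of $\Z_2^n$ admits a basis in reduced row echelon form. Starting from any basis of $H$, one applies Gaussian elimination (valid over $\F_2$) to produce vectors $v_1,\ldots,v_h \in H$ together with pivot coordinates $i_1 < i_2 < \cdots < i_h$ such that $(v_j)_{i_j}=1$ and $(v_j)_{i_k}=0$ for every $k \neq j$. Row reduction preserves both the subgroup $H$ and the dimension count $h$, so the pivots are well defined and distinct.

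Next I would exhibit the element of large Hamming weight explicitly by taking the sum $w = v_1 + v_2 + \cdots + v_h \in H$. By the pivot property, the coordinate $w_{i_j}$ equals $\sum_{k=1}^{h}(v_k)_{i_j} = (v_j)_{i_j} = 1$ for every $j$. Thus $w$ has a $1$ in all $h$ distinct positions $i_1,\ldots,i_h$, giving $\#\{i : w_i = 1\} \geq h$, which is precisely what is claimed.

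I do not expect any real obstacle here: the only point worth checking is that Gaussian elimination over $\F_2$ genuinely produces a reduced basis inside $H$ (equivalently, that elementary row operations only replace a basis vector by a sum of basis vectors, which stays in $H$). One could alternatively give a short induction on $h$, but the echelon form argument is more direct and avoids the need to track how an inductively obtained vector of weight $h-1$ interacts with a coset representative. I would therefore present only the echelon form proof.
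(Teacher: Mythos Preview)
Your proof is correct. It differs from the paper's argument, which proceeds by induction on $n$: if $h=n$ one takes $e_1+\cdots+e_n$, and otherwise one picks some $e_i\notin H$ and applies the $\{i\}$-compression $C_{\{i\}}$ to $H$. Since $e_i\notin H$, this compression is simply the coordinate projection $x\mapsto x-x_ie_i$, which is linear, injective on $H$, and never increases Hamming weight; the image is thus an $h$-dimensional subgroup of $\langle 0,e_1,\dots,e_{i-1},e_{i+1},\dots,e_n\rangle$, and the inductive hypothesis finishes the job.

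Your route via a reduced row echelon basis is more self-contained: it needs nothing beyond Gaussian elimination over $\F_2$, and it produces the heavy element $w=v_1+\cdots+v_h$ in one stroke rather than unwinding an induction. The paper's route, by contrast, is phrased in the compression language that pervades the rest of the article, so it integrates smoothly with the surrounding machinery even if it is slightly less direct as a standalone argument. Either proof is perfectly adequate here.
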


\begin{proof}
If $h=n$, take $e_1 + e_2 + ... + e_n$.
Otherwise, there exists a basis element $e_i$ such that $e_i \notin H$.
In this case, 
moving from $H$ to $C_{\{i\}}(H)$ simply deletes $e_i$ from the standard basis representations of $H$'s elements,
thereby not increasing their Hamming weights.
Now note that $C_{\{i\}}(H)$ is an $h$-dimensional subgroup of 
$\left\langle 0, e_1, ..., e_{i-1}, e_{i+1}, ..., e_n\right\rangle$,
and by induction on $n$ contains an element of Hamming weight at least $h$.
\end{proof}

\section{The Freiman--Ruzsa Theorem in \texorpdfstring{$\Z_2^n$}{}}\label{freisect}

For $A \subseteq \Z_2^n$ we refer to $|\left\langle A\right\rangle|/|A|$ as $A$'s \emph{spanning constant}
and to $K=|A+A|/|A|$ as its \emph{doubling constant}.
The Freiman--Ruzsa theorem gives an upper bound on the spanning constant in terms of $K$.
We first review the theorem and some of its quantitative aspects.
Then we calculate the bound explicitly, 
and in particular we determine its correct asymptotics which turns out to be $\Theta (2^{2K}/K)$.
We present the proof in two stages, starting with the asymptotic estimates. 
We find this presentation convenient, since the proof of the asymptotic bound already contains our main ideas.

\subsection{Brief Review of the Freiman--Ruzsa Theorem}

Freiman's celebrated theorem~\cite{freiman} states that 
if $A \subset \Z$ is a finite subset with $|A+A| \leq  K|A|$,
then $A$ is included in a generalized arithmetic progression, 
whose size (relative to $|A|$) and dimension are bounded.
The bounds depend only on $K$ and not on $|A|$.
Ruzsa~\cite{ruzsa_proof_freiman1,ruzsa_proof_freiman2} 
has made crucial contributions to this area. More recently much work
was done on similar problems where $\Z$ is replaced by other groups.
In particular Ruzsa~\cite{ruzsa_thm} proved the analogous result for abelian torsion groups.
See~\cite{viola} for a nice exposition.

\begin{thm}[Ruzsa]
Let $G$ be an abelian group in which every element has order at most $r$.
If $A$ is a finite subset of $G$ with $|A+A| \leq  K|A|$, 
then $A$ is contained in a coset of a subgroup $H \triangleleft G$ of size $|H| \leq f(r,K)|A|$,
where 
$$f(r,K) \leq K^2 r^{K^4}.$$
\end{thm}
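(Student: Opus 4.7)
The plan is to combine the Plünnecke--Ruzsa inequality with Ruzsa's covering lemma, and then use the torsion hypothesis to bound a subgroup by the size of a generating set. After translating so that $0 \in A$, the affine span becomes the subgroup $\langle A \rangle$ generated by $A$, which is what I will bound.

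First I would invoke the Plünnecke--Ruzsa inequalities, which under the hypothesis $|A + A| \leq K|A|$ yield $|nA - mA| \leq K^{n+m}|A|$. The two instances I need are $|A - A| \leq K^2|A|$ and $|3A - A| \leq K^4|A|$. Second, I would establish Ruzsa's covering lemma: if $|X + Y| \leq L|X|$, then there exists $T \subseteq Y$ with $|T| \leq L$ such that $Y \subseteq T + (X - X)$. The proof is a short greedy argument: pick $T \subseteq Y$ maximal so that the translates $\{t + X : t \in T\}$ are pairwise disjoint; disjointness gives $|T| \cdot |X| \leq |X + Y|$, and maximality gives the covering conclusion.

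Applying the covering lemma with $X = A$ and $Y = 2A - A$ yields a set $T$ of size at most $K^4$ with $2A - A \subseteq T + (A - A)$. A short induction on $k \geq 1$ then gives $kA \subseteq (k-1)T + (A - A)$: the base case $A \subseteq A - A$ uses $0 \in A$, and the inductive step computes
$$ (k+1)A \;=\; kA + A \;\subseteq\; (k-1)T + (A - A) + A \;=\; (k-1)T + (2A - A) \;\subseteq\; kT + (A - A). $$
Taking the union over $k \geq 1$ and noting that $\bigcup_{j \geq 0} jT \subseteq \langle T \rangle$, I obtain $\langle A \rangle \subseteq \langle T \rangle + (A - A)$. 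Now the torsion hypothesis enters: since $G$ has exponent $r$, every element of $\langle T \rangle$ can be written as $\sum_{t \in T} c_t t$ with $c_t \in \{0, 1, \ldots, r - 1\}$, so $|\langle T \rangle| \leq r^{|T|} \leq r^{K^4}$. Combining with $|A - A| \leq K^2 |A|$ delivers $|\langle A \rangle| \leq K^2 r^{K^4} |A|$, as claimed.

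The main obstacle will be the Plünnecke--Ruzsa inequality itself, whose standard proof uses the magnification ratio on bipartite commutative graphs and is considerably more technical than the other ingredients. Everything following it --- the covering lemma, the iteration, and the torsion bound --- is short and combinatorial.
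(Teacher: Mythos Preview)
Your proposal is correct and is essentially the classical proof of Ruzsa's theorem: Pl\"unnecke--Ruzsa to bound $|3A-A|$ and $|A-A|$, the covering lemma to produce a small $T$ with $2A-A \subseteq T + (A-A)$, iterate to trap $\langle A\rangle$ in $\langle T\rangle + (A-A)$, and finish with the torsion bound $|\langle T\rangle| \le r^{|T|}$. One small point worth making explicit: to pass from $\bigcup_k kA$ to the full subgroup $\langle A\rangle$ you are implicitly using the torsion hypothesis again (negatives are positive multiples), not just $0\in A$.

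As for comparison with the paper: there is nothing to compare. The paper does not prove this theorem; it is quoted as a background result of Ruzsa~\cite{ruzsa_thm}, with a pointer to~\cite{viola} for an exposition. The paper's own contributions concern the sharp form of $F(K)=F(2,K)$ via compressions, which is a different argument entirely.
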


Better estimates on $f(r,K)$ were subsequently found. We denote by $F(r,K)$
the smallest bound for which this statement holds.
Note that $F(r,K)$ is non-decreasing in $K$ and $F(r,1)=1$.

By considering the case where $A$ is an affine basis of $\Z_{r}^{2(K-1)}$
we see that $F(r,K) \geq r^{2K-O(\log K)}$ (see Example~\ref{IPE} below).
This suggests the following conjecture~\cite{ruzsa_thm}.

\begin{conj}[Ruzsa]\label{conjexp}
For some $C \geq 2$ we have $F(r,K) \leq r^{CK}$.
\end{conj}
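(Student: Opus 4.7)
The plan is to extend the compression-based analysis of Theorem \ref{formulaF} from $\Z_2^n$ to an arbitrary $r$-torsion abelian group $G$. I would first reduce to the case $G = \F_p^n$ with $p$ prime, by decomposing $G$ into its $p$-primary components, applying a Pl\"unnecke--Ruzsa argument to each projection to transfer the doubling bound at the cost of a constant factor, and further quotienting $\Z_{p^k}^n$ by $pG$ to reach an $\F_p$-vector space; the resulting loss is absorbed into the constant $C$.

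Working now over $\F_p^n$, I would build the $p$-ary compression apparatus in direct analogy with Section \ref{toolsect}. Order $\F_p^n$ lexicographically, breaking ties within a coordinate by the standard order on $\{0,1,\ldots,p-1\}$, and for each $I \subseteq [n]$ define $C_I(A)$ coset-by-coset with respect to $H_I = \langle e_i : i \in I\rangle$. The $p$-ary analog of Proposition \ref{segs} --- that the sum of two initial segments is an initial segment --- should follow by the same inductive argument, and Lemma \ref{sumcomp} then carries over with the $p$-ary Hopf--Stiefel function of \cite{eliahou_kervaire_vector_spaces} in place of the binary one. After a Freiman-isomorphism reduction that embeds the standard affine basis $E = \{0, e_1, \ldots, e_n\}$ into $A$, define $\langle\langle E\rangle\rangle$-compressed sets exactly as in the binary case and compress maximally.

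The core step is the $p$-ary analog of Lemma \ref{structure}: an $\langle\langle E\rangle\rangle$-compressed set $A$ should contain a maximal subgroup $H = \langle e_1, \ldots, e_h\rangle$ and decompose as $A = H \sqcup \bigsqcup_{i,c} A_{i,c}$, with $A_{i,c} = A \cap (c \cdot e_{h+i} + H)$, where $1 \le i \le m = n - h$ and $c \in \{1, \ldots, p-1\}$. The pair-compression constraints should force each $|A_{i,c}|$ to be small and give an overall bound $|A| \le (1 + \alpha(m,p))|H|$ for an explicit $\alpha$. Counting distinct $H$-cosets in $A+A$ --- each pair $(c \cdot e_{h+i},\, c' \cdot e_{h+j})$ contributes a translate, with controlled collisions --- then yields a lower bound on $K = |A+A|/|A|$ that is essentially linear in $m$ independently of $p$. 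Combined with $|\langle A\rangle|/|H| = p^m$, this would give $|\langle A\rangle|/|A| \le p^{CK}$ for some $C$.

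The main obstacle, which is exactly why this conjecture remains open for $r \ge 3$, is this final optimization. For $p = 2$ the extremal configurations --- Hamming balls in the sense of Theorem \ref{AB} --- admit an exact recursive description that makes Theorem \ref{formulaF} tight. For $p \ge 3$, the extra $p - 2$ nontrivial scalar multiples in each new direction create a combinatorial explosion of candidate $\langle\langle E\rangle\rangle$-compressed sets, and the extremal $p$-ary Hopf--Stiefel problem has no known closed form. I expect that a pure compression approach will at best yield $F(r,K) \le r^{O(K \log r)}$ or $r^{O(K\log K)}$, and that reaching $r^{CK}$ with $C$ absolute will require a genuinely new ingredient beyond compressions --- perhaps a Fourier-analytic input in the spirit of Sanders, or a lifting argument that bootstraps the tight binary bound of Theorem \ref{formulaF} through the $p$-primary decomposition.
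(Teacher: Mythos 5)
This statement is a conjecture of Ruzsa, not a theorem of the paper, and the paper supplies no proof of it; the paper only \emph{confirms} it for $r=2$ (with the optimal constant $C=2$) via Theorem~\ref{newbound} and Theorem~\ref{formulaF}. You have correctly recognized this: your ``proposal'' is not a proof but a candid plan of attack together with an honest assessment of why it would stall, which is exactly the right posture toward an open problem.

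Your sketch of the $p$-ary compression machinery --- lexicographic order on $\F_p^n$, $C_I$ coset-by-coset, the $p$-ary Hopf--Stiefel function of~\cite{eliahou_kervaire_vector_spaces}, and an analog of Lemma~\ref{structure} --- is the natural direct generalization of Section~\ref{toolsect}, and the reduction to $\F_p^n$ via primary decomposition and Pl\"unnecke--Ruzsa is standard. Two cautionary remarks. First, the Pl\"unnecke--Ruzsa reduction loses more than you concede: projecting to a $p$-primary component and then quotienting by $pG$ typically squares or worse the doubling constant, so keeping the final exponent linear in the \emph{original} $K$ with an \emph{absolute} $C$ needs care; this loss is benign for a $\mathrm{poly}(K)\, r^{O(K)}$ bound but not obviously for a clean $r^{CK}$. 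Second, and more fundamentally, the step that actually closes Theorem~\ref{newbound} for $r=2$ is the inequality $\sum_{i<j}|A_i+A_j| \geq \sum_{i<j}\max(|A_i|,|A_j|)$, which together with Lemma~\ref{structure}(6) ties $|A+A|$ to $m$ and $|A|$ tightly enough to force $m$ to its maximum $t-1$. For $p\geq 3$ the $H$-cosets of $A$ are indexed by pairs $(i,c)$ with $c\in\{1,\ldots,p-1\}$, the pairwise-sum constraints are much weaker per coset, and no analog of the binary constraint $|A_i|+|A_j|\leq|H|$ is forced by compression; this is precisely the ``combinatorial explosion'' you identify. Your conclusion --- that pure compression likely tops out at $r^{O(K\log r)}$ or $r^{O(K\log K)}$ and that something new is needed --- is a reasonable reading of the state of the art the paper cites.

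So: there is no gap to report relative to the paper, because the paper proves nothing here. If the assignment intended a provable statement, the nearest one is Theorem~\ref{newbound} (the $r=2$ case of this conjecture), and you may wish to redirect your effort there.
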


In an attempt to understand the role of torsion in these phenomena,
much work was dedicated to the special case $r=2$, where $G=\Z_2^n$.
This work is also motivated by the role that $\Z_2^n$ plays in discrete
mathematics and in particular in coding theory~\cite{zemor_codes}.
We introduce the following notation:
$$ F(K) = F(2,K) = \sup\left\{ \frac{\left|\left\langle A\right\rangle\right|}{|A|} \;\Bigg{|}\; 
              A \subseteq \Z_2^n, \; n \in \N, \; \frac{|A+A|}{|A|} \leq  K \right\} $$
As already observed by Ruzsa~\cite{deshouillers_hennecart_plagne}, for $r=2$ his method gives
somewhat more, namely $F(K) \leq K2^{\left\lfloor K\right\rfloor^3-1}$.
Later work by Green and Ruzsa~\cite{green_ruzsa} gave $F(r,K) \leq K^2 r^{2K^2-2}$,
which was again refined for $r=2$ to $F(K) \leq 2^{O\left(K^{3/2}\log K\right)}$ by Sanders~\cite{sanders}.
Using compressions, Green and Tao~\cite{green_tao} were able to prove 
$F(K) \leq 2^{2K + O\left(\sqrt{K} \log K\right)}$.
Note that this confirms Conjecture~\ref{conjexp} for $r=2$.
The best bound so far is due to Konyagin~\cite{konyagin} 
who further improved this method to derive $F(K) \leq 2^{2K + O\left(\log K\right)}$.

The range of small $K$ has received some attention as well. 
In the sub-critical range $K<2$, the exact value of $F(K)$ is known to be
$F(K) = K$ for $1 \leq K < 7/4$ and $F(K) = \frac87 K$ for $7/4 \leq K < 2$.
See~\cite{diao,green_tao,hennecart_plagne,lev_structure,zemor}. 
For $K \leq 12/5$ we have $F(K) \leq (2K-1)/(3K-K^2-1)$ and for $12/5 < K < 4$, a recursive formula is available.
See~\cite{deshouillers_hennecart_plagne}.

The following simple construction~\cite{ruzsa_thm} provides a lower bound on $F(K)$.

\begin{ex}[Independent Points]\label{IPE}
Consider the subset:
$$A_{[t]} = \{0,e_1,e_2,...,e_t\} \subseteq \Z_2^t$$
Here, for $t \in \N$ we have $ F\left(\frac{\binom{t}{2} + t + 1}{t + 1}\right) \geq \frac{2^t}{t + 1} $,
and by monotonicity one can obtain:
$$F(K) \geq \frac{1}{4K}2^{2K}(1-o(1))$$
\end{ex}
              
\subsection{Asymptotics of \texorpdfstring{$F(K)$}{}}

We first prove a new upper bound, 
which coincides with the construction in Example~\ref{IPE} for $t \in \N$.

\begin{thm}\label{newbound}
$F \left(\frac{\binom{t}{2} + t + 1}{t+1}\right) \leq \frac{2^t}{t+1}$ holds for $2 \leq t \in \R$. Consequently,
$$ F( K) \leq \frac{1}{2K}2^{2K}(1-o(1)) .$$
\end{thm}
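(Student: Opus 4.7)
The plan is to reduce $A$ to a structured form via compressions that preserve affine generation, apply the structure theorem, and extract the bound from a coset-wise analysis of $|A+A|$ matching the extremal example $A_{[t]}$.

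After translating so that $0 \in A$ and passing to $\Z_2^n = \langle A\rangle$, the set $A$ contains an affine basis of $\Z_2^n$, so a change of coordinates gives $E = \{0, e_1, \ldots, e_n\} \subseteq A$. A maximal sequence of compressions $C_I$ with $C_I(A) \supseteq E$ preserves $|A|$, does not increase $|A+A|$ (Lemma~\ref{sumcomp}), and keeps $\langle A\rangle = \Z_2^n$; so we may assume $A$ is $\langle\langle E\rangle\rangle$-compressed. Lemma~\ref{structure} then writes $A = H \cup A_1 \cup \cdots \cup A_m$ with $|H| = 2^h$, $n = h+m$, each $A_i$ a nonempty lex initial segment of $e_{h+i}+H$ of size $a_i$, with $a_i + a_j \leq |H|$ for $i \neq j$ and $|A| \leq (1+m/2)|H|$ when $m \geq 2$.

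Coset-wise, $A_i+A_i \subseteq H$, while $H+A_i$ fills the coset $e_{h+i}+H$ and $A_i+A_j$ (for $i\neq j$) lies in the distinct coset $e_{h+i}+e_{h+j}+H$. Hence $|A+A| = (m+1)|H| + \sum_{i<j}|A_i+A_j|$. Writing $\tilde A_i := A_i-e_{h+i}$, a lex initial segment of $H$ containing $0$, gives $|A_i+A_j| \geq \max(a_i, a_j) \geq (a_i+a_j)/2$, so with $S := |A|-|H|$,
\[
|A+A| \;\geq\; (m+1)|H| + \tfrac{m-1}{2}\,S.
\]
Combined with $|A+A| \leq K|A|$ this rearranges to $S(K-(m-1)/2) \geq (m+1-K)|H|$. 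Together with $S \leq (m/2)|H|$ from Lemma~\ref{structure}(7), the case $m \geq 2$ forces $K \geq K_{m+1}$ where $K_j := (\binom{j}{2}+j+1)/(j+1)$; since $K_j$ is increasing in $j$, this pushes $m \leq t-1$. In the generic range $(m-1)/2 < K < m+1$ the same inequality yields $|A| \geq |H|(m+3)/(2K-m+1)$, hence
\[
\frac{|\langle A\rangle|}{|A|} \;\leq\; \phi(m) \;:=\; \frac{2^m(2K-m+1)}{m+3}.
\]

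A direct substitution gives $\phi(t-1) = 2^t/(t+1)$ for $K = K_t$, and $\phi'(m)/\phi(m) = \ln 2 - 1/(2K-m+1) - 1/(m+3)$ evaluated at $m = t-1$ equals $\ln 2 - (t+3)/(2(t+2)) > 0$ for every $t \geq 2$, so $\phi$ is non-decreasing on $[0, t-1]$ and $\phi(m) \leq 2^t/(t+1)$ for every integer $m \leq t-1$. The edge cases $m \in \{0,1\}$ or $K \geq m+1$ reduce to the weaker bound spanning $\leq 2^m K/(m+1)$, which is verified to be $\leq 2^t/(t+1)$ using the crude estimate $m \leq K_t - 1 < t - \log_2(t+1)$. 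The main obstacle is the bookkeeping around $\phi$ and the achievability threshold $K_{m+1}$; both expressions match the extremal $A_{[t]}$, for which Lemma~\ref{structure} yields $h=1$, $m=t-1$, $a_i=1$. For the asymptotic corollary, solving $K_t = (t^2+t+2)/(2(t+1))$ in $t$ gives $t = 2K - 1/K + O(1/K^2)$ as $K \to \infty$, hence $2^t/(t+1) = 2^{2K}(1-o(1))/(2K)$, proving $F(K) \leq 2^{2K}(1-o(1))/(2K)$.
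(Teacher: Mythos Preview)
Your approach mirrors the paper's: reduce to an $\langle\langle E\rangle\rangle$-compressed set, apply the structure lemma to write $A = H\cup A_1\cup\cdots\cup A_m$, and bound $|A+A|$ coset-wise by $(m+1)|H| + \tfrac{m-1}{2}(|A|-|H|)$. Where the paper shows this lower bound on $|A+A|$ is \emph{decreasing} in $m$ and evaluates at $m=t-1$, you equivalently bound the spanning constant by $\phi(m) = 2^m(2K-m+1)/(m+3)$ and try to show $\phi$ is \emph{increasing}; these are the same argument in dual form.

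There is, however, a real gap in your monotonicity step. You compute $\phi'/\phi = \ln 2 - 1/(2K-m+1) - 1/(m+3)$, check it is positive at the single point $m=t-1$, and conclude that $\phi$ is non-decreasing on all of $[0,t-1]$. Positivity of a derivative at one point does not give monotonicity on an interval. What actually saves the argument is that $g(m) := 1/(2K-m+1)+1/(m+3)$ is convex, so its maximum on $[0,t-1]$ is at an endpoint; you must also verify $g(0) = 1/(2K+1)+1/3 \le 2/3 < \ln 2$. The paper sidesteps this by observing directly that $(m+3)\ln 2 - 1 > (m+2)/2$ for all $m\ge 0$, which combined with $(1+m/2)/2^m \ge |A|/|G|$ gives the needed monotonicity in one line.

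Your edge-case treatment is also shaky. The ``crude estimate'' $m \le K_t-1 < t-\log_2(t+1)$ yields $2^m < 2^t/(t+1)$, but you need $2^m K/(m+1) \le 2^t/(t+1)$, and since $K/(m+1) \ge 1$ in this range the inequality does not follow from what you wrote. In fact this entire case split is unnecessary: when $m \le K-1$ one has $(2K-m+1)/(m+3) \ge 1$, so $\phi(m) \ge 2^m \ge |G|/|A|$ trivially from $|A|\ge|H|$. Thus spanning $\le \phi(m)$ holds for \emph{every} $m\in\{0,\dots,t-1\}$, and once $\phi$ is correctly shown monotone the proof is complete without the separate edge analysis.
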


The exponential term $2^{2 K}$ is as in~\cite{green_tao,konyagin},
but the polynomial coefficient $1/K$ is new.
Thus it re-proves Conjecture~\ref{conjexp} for $r=2$ with $C=2$.
This bound and Example~\ref{IPE} determine the asymptotics of $F(K)$ up to a factor of $2$.
In the next section we calculate $F(K)$ exactly, 
and show that the gap is unavoidable and results from the oscillations in $F(K)$.

\begin{proof}
For an affinely generating subset $A \subset G = \Z_2^n$, it is sufficient to prove: 
\begin{equation}\label{upper1}
|A| = \frac{t+1}{2^t}|G| \;\;\;\; \Rightarrow \;\;\;\; |A+A| \geq \frac{\binom{t}{2}+t+1}{2^t}|G|
\end{equation}
where $2 \leq t \in \R$. Since both expressions are monotone in $t$, the theorem follows.

As in~\cite{green_tao}, the main tool is reduction to compressed sets of some sort.
First, since $\left\langle A\right\rangle = G$ we can assume
that $A$ contains an affine basis for $G$.
But $|A|,|A+A|$ are not affected by invertible affine transformations,
so we may assume without loss of generality $E \subseteq A$, 
where $E = \{0,e_1,e_2,...,e_n\}$ is the standard affine basis of $G$.
Now we assume without loss of generality that $A$ is $\langle\langle E \rangle\rangle$-compressed.
Indeed, supposing (\ref{upper1}) holds for $\langle\langle E \rangle\rangle$-compressed subsets,
we proceed to general subsets inducting on $\hbar(A)$.
Let $I \subseteq [n]$ be a set such that $E \subseteq C_I(A) \neq A$.
By Lemma~\ref{sumcomp}, $|C_I(A)+C_I(A)| \leq |A+A|$ while $|C_I(A)| = |A|$, 
so $A$ satisfies (\ref{upper1}) provided that $C_I(A)$ does.
The inductive argument applies, since $\hbar(A) > \hbar(C_I(A))$ by Lemma~\ref{comp}(3).

We continue the proof using the structure of $\left\langle \left\langle E \right\rangle\right\rangle$-compressed sets.
As in Lemma~\ref{structure} let $H \subseteq A$ be a maximal subgroup, 
$h = \dim H$, $m = \mathrm{codim}\;H$ and $A_i = A \cap (e_{h+i} + H)$ for $1 \leq i \leq m$.
By Lemma~\ref{structure}(7), $|A| \leq (1+m/2)|H|$, and an upper bound on $m$ is given by
$$ \frac{1+\frac{m}{2}}{2^m} \geq \frac{|A|}{|G|}  $$
where the case $m=1$ follows from the assumption $2 \leq t$.

Given $m$, Lemma~\ref{structure}(4) gives a decomposition of $A$ into $m+1$ parts,
and we use it to show that $A+A$ is at least $\sim m/2$ times larger than $A$.
This is shown by the following calculation, 
where all indices go from $1$ to $m$ and all unions are disjoint:
\begin{align*}
A \;&=\; H \;\cup\; \bigcup_i A_i \\
\Rightarrow\;\;\;\;\;\; A+A \;&=\; H \;\cup\; \bigcup\limits_i(A_i + H) \;\cup\; \bigcup\limits_{i<j}(A_i+A_j) \\
\sum\limits_{i<j}|A_i+A_j| \;&\geq\; \sum\limits_{i<j}\max \left(|A_i|,|A_j|\right)
\;\geq\; \sum\limits_{i<j}\frac{|A_i|+|A_j|}{2} \;=\; \frac{m-1}{2} \sum\limits_{i}|A_i| 
\;=\; \frac{m-1}{2} \left(|A| - |H|\right) \\
\Rightarrow\;\;\;\; |A+A| \;&\geq\; |H| + m|H| + \frac{m-1}{2} \left(|A| - |H|\right) 
\;=\; \frac{m+3}{2} \cdot \frac{|G|}{2^m} + \frac{m-1}{2} |A|
\end{align*}
The right-hand side is decreasing in $m$ in the real interval where $((m+3)\log 2 - 1)/{2^m} > |A|/|G|$.
This interval includes the range of our interest, which is $(m/2 + 1)/{2^m} \ge |A|/|G| = (t+1)/2^t$, or equivalently $m \le t-1$.
Thus, we obtain a lower bound on $|A+A|$ by evaluating this expression at $t-1$, namely:
$$ |A+A| \geq \frac{(t-1)+3}{2^{(t-1)+1}} |G| + \frac{(t-1)-1}{2} \cdot \frac{t + 1}{2^t} |G| =
\frac{\binom{t}{2}+t+1}{2^{t}}|G| $$
\end{proof}

\subsection{Exact Calculation of \texorpdfstring{$F(K)$}{}}

Theorem~\ref{formulaF}, which we will shortly prove, provides an explicit formula of $F(K)$.
This enables one to rederive the asymptotics of $F(K)$, and to deduce the following corollary.

\begin{cor}
Both bounds in the asymptotic inequalities
$$ \frac{1}{4 K}2^{2 K}(1-o(1)) \;\leq\; F(K) \;\leq\; \frac{1}{2 K}2^{2 K}(1-o(1)) $$
are sharp up to the $o(1)$ terms. \hfill \qed
\end{cor}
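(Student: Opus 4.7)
The plan is to use the explicit piecewise-linear formula for $F(K)$ from Theorem~\ref{formulaF} and evaluate it along two well-chosen sequences of $K$-values. On each sub-interval $F(K)/K$ is a constant depending on $t$, and since $K \mapsto K^2/2^{2K}$ is eventually monotone decreasing, the ratio $F(K)/(2^{2K}/K)$ on a single linear piece attains its maximum at the left endpoint and its minimum just before the right endpoint. Hence the oscillation of $F(K)/(2^{2K}/K)$ is controlled entirely by the endpoints of the pieces, and in particular by the jump discontinuities that occur at $K = \tfrac{t^2+t+1}{2t}$.

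For the upper bound, I would test $F$ at the sequence $K_t = \tfrac{\binom{t}{2}+t+1}{t+1}$, the left endpoint of the first piece for parameter $t$. The formula immediately gives $F(K_t) = \tfrac{2^t}{t+1}$, and expanding $2K_t = t + \tfrac{2}{t+1}$ yields $2^{2K_t} = 2^t(1+O(1/t))$. Combining these, the ratio $F(K_t) \big/ \bigl(\tfrac{2^{2K_t}}{2K_t}\bigr)$ equals $\tfrac{t}{t+1}(1+O(1/t)) \to 1$, so the constant $\tfrac12$ in the upper bound is asymptotically sharp.

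For the lower bound, I would test $F$ at $K'_t = \tfrac{t^2+t+1}{2t}$, the left endpoint of the second piece, where the formula yields $F(K'_t) = \tfrac{2^t}{t}$. Since $2K'_t = t+1+\tfrac{1}{t}$, one finds $2^{2K'_t} = 2^{t+1}(1+O(1/t))$ and $\tfrac{2^{2K'_t}}{4K'_t}=\tfrac{2^t}{t}(1+O(1/t))$, so the ratio $F(K'_t) \big/ \bigl(\tfrac{2^{2K'_t}}{4K'_t}\bigr) \to 1$, showing the constant $\tfrac14$ is likewise sharp.

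No substantive obstacle remains once Theorem~\ref{formulaF} is in hand: the corollary reduces to elementary asymptotic expansions in $t$. The only point requiring thought is the choice of test sequences, and the conceptual content is that $F(K)$ roughly doubles across each jump at $K = \tfrac{t^2+t+1}{2t}$, which is exactly what produces the factor-of-two gap between the two constants $\tfrac14$ and $\tfrac12$ and simultaneously witnesses their sharpness.
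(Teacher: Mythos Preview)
Your computations are correct and follow exactly the approach the paper intends: the corollary appears with a bare $\qed$, the paper having already noted it follows directly from Theorem~\ref{formulaF}, so you have simply supplied the elementary asymptotic expansions that the paper leaves to the reader. One small imprecision in your closing paragraph: the asymptotically factor-of-two jump in $F$ actually occurs at the breakpoints $K_t=\frac{\binom{t}{2}+t+1}{t+1}$ (the boundary between consecutive values of $t$), not at $K'_t=\frac{t^2+t+1}{2t}$ where the discontinuity ratio is only $\frac{t^2+t+2}{t^2+t+1}\to 1$; but this is heuristic commentary and does not affect the validity of your argument, since the calculations in your second and third paragraphs stand on their own.
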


It also settles the following conjecture of Diao~\cite{diao}.

\begin{cor}
$F(K)$ is a piecewise linear function. \hfill \qed
\end{cor}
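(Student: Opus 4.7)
The plan is to derive this corollary as an immediate consequence of the explicit formula for $F(K)$ given in Theorem~\ref{formulaF}. On each of the two sub-ranges of $K$ associated to each integer $t \ge 1$ in that theorem, the formula is manifestly a positive constant (depending only on $t$) times $K$: one obtains $(2^t/(\binom{t}{2}+t+1))\cdot K$ on the lower sub-range and $(2^{t+1}/(t^2+t+1))\cdot K$ on the upper. Each such expression is linear in $K$, so once we know that these sub-ranges genuinely partition $[1,\infty)$, piecewise linearity follows by the definition.

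The only routine check I would carry out is the verification of this partition. First I would confirm that the sequence of main endpoints $(\binom{t}{2}+t+1)/(t+1)$, $t\ge 1$, is strictly increasing in $t$ and tends to infinity, so the main intervals indexed by $t$ tile $[1,\infty)$; this reduces to an elementary inequality. Next I would locate the internal breakpoint $(t^2+t+1)/(2t)$ relative to these endpoints; a short manipulation shows it lies strictly inside the $t$-th main interval exactly for $t\ge 3$, while for $t=1,2$ the second sub-range is vacuously empty. This does not affect piecewise linearity; it merely means that those main intervals carry a single linear piece.

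At each non-trivial internal breakpoint $K=(t^2+t+1)/(2t)$ with $t\ge 3$, a direct comparison of the two coefficients shows that the upper formula strictly exceeds the lower, so $F$ in fact has an upward jump there, consistent with $F$ being non-decreasing. Thus $F$ is piecewise linear but not continuous. I expect no genuine obstacle in this argument: granted Theorem~\ref{formulaF}, the corollary is by inspection of the case analysis, which is presumably why the author omits an explicit proof. All substantive work sits in the proof of the underlying explicit formula, not in its corollaries.
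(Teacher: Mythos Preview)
Your proposal is correct and follows the same route as the paper: the corollary is read off directly from the explicit formula of Theorem~\ref{formulaF}, which is why the paper gives no proof beyond a \qed. Your extra checks (that the sub-ranges partition $[1,\infty)$, that the internal breakpoint is vacuous for $t=1,2$, and that $F$ actually jumps at the non-trivial internal breakpoints) are all accurate and go beyond what the paper spells out; note, incidentally, that $F$ also jumps at the main-interval endpoints (e.g.\ at $K=7/4$ from $7/4$ to $2$), consistent with the known values $F(K)=K$ on $[1,7/4)$ and $F(K)=\tfrac{8}{7}K$ on $[7/4,2)$ quoted earlier in the paper.
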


\begin{figure}[hbt]
\begin{center}
\includegraphics[width=1.00\textwidth,natwidth=2445,natheight=1845]{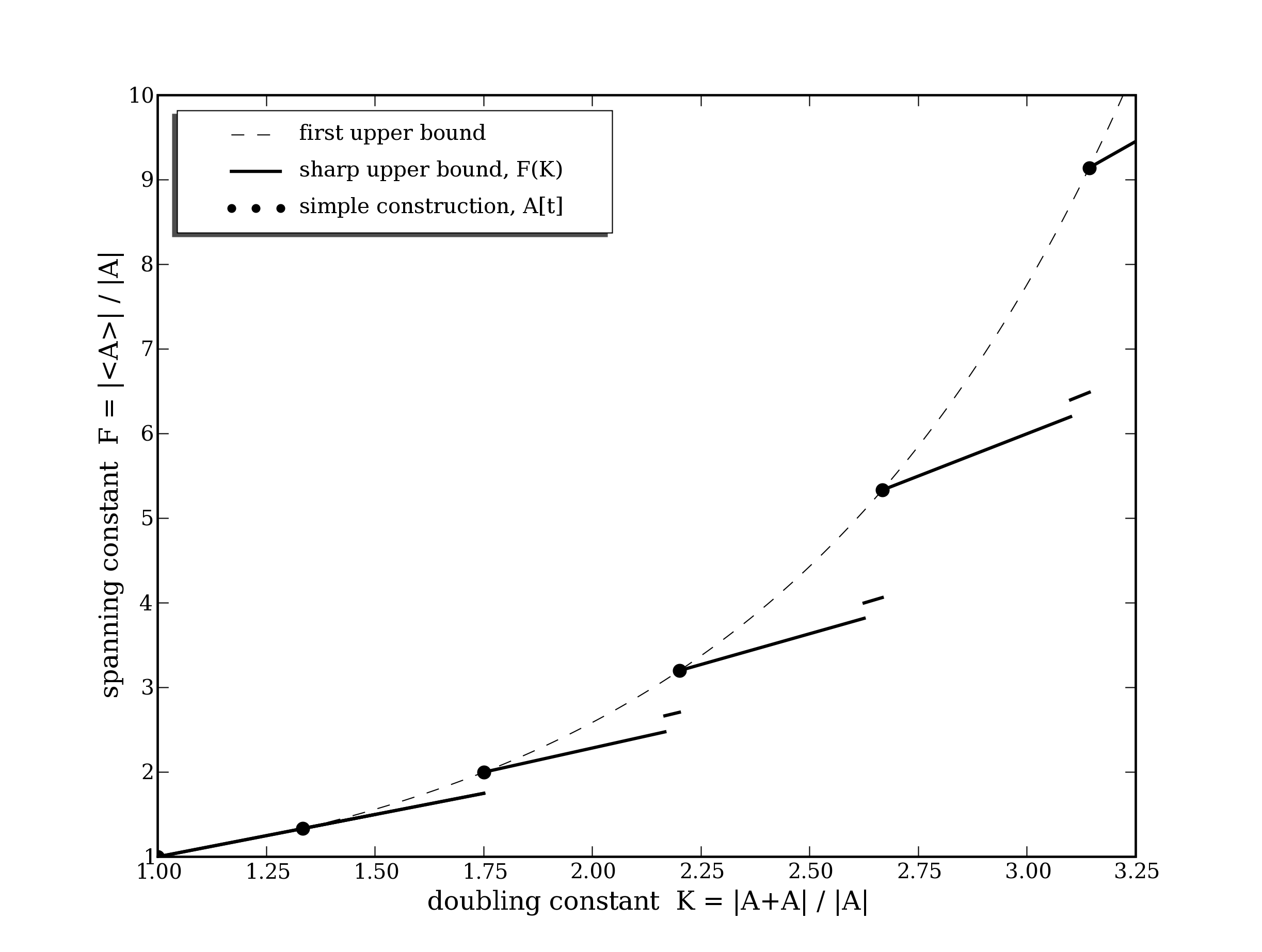}
\end{center}
\caption{An illustration of $F(K)$}
\label{fig_f}
\end{figure}

In order to calculate $F(K)$, it is useful to consider a related function $\tilde{K}(\tilde{F})$,
which is defined for rational numbers of the form $\tilde{F} = {2^a}/{b} \geq 1$.
$$ \tilde{K}(\tilde{F}) = \inf\left\{ \frac{|A+A|}{|A|} \;\Bigg{|}\; 
A \subseteq \Z_2^n, \; n \in \N, \; \frac{\left|\left\langle A\right\rangle\right|}{|A|} = \tilde{F} \right\}$$
That is, the minimal doubling constant of an affinely generating set of relative size exactly $1/\tilde{F}$.
By definition $ F(K) = \sup \{ \tilde{F} \;|\; \tilde{K}(\tilde{F}) \leq K \}$. 
Theorem~\ref{newbound} asserts $\tilde{K}\left(2^t/(t+1)\right) \geq \left(\binom{t}{2}+t+1\right)/(t+1)$
for real $t \geq 2$, and by Example~\ref{IPE} it is an equality for $t \in \N$.
In order to analyze $\tilde{K}(\tilde{F})$, we refine the arguments in the proof of Theorem~\ref{newbound},
and elaborate on the construction in Example~\ref{IPE}. 
This yields a better view of the structure of sets with a small doubling constant.
We begin by describing the extended example.

\begin{ex}\label{IPE2}
For non-negative integers $s,t$ such that $s < t$, consider the subset:
$$A_{[t,s]} = \{0,e_0,e_1,e_2,...,e_t,e_0+e_1,e_0+e_2,...,e_0+e_{t-s}\} \subseteq \Z_2^{t+1}$$
It is not hard to verify that
$$|A_{[t,s]}| = 2(t+1)-s, \;\;\;\;\;\;\;\; 
|A_{[t,s]}+A_{[t,s]}| = 2\left(\binom{t}{2}+t+1\right) - \binom{s}{2}, \;\;\;\;\;\;\;\;
\left|\left\langle A_{[t,s]}\right\rangle\right| = 2^{t+1}. $$
Therefore:
$$ \tilde{K}\left(\frac{2^t}{t+1-s/2}\right) \leq \frac{\binom{t}{2}+t+1-\binom{s}{2}/2}{t+1-s/2} $$
\end{ex}

This example provides an upper bound on $\tilde{K}(\tilde{F})$ for a discrete sequence of values. 
When $s=0$ it reduces to Example~\ref{IPE}.
However, $\tilde{K}(\tilde{F})$ is not necessarily monotone, 
so we cannot imitate the conclusion of Example~\ref{IPE} and extend the upper bound to general $\tilde{F}$.
Still, the following argument does the work.

\begin{lemma}[Sublinearity of $\tilde{K}(\tilde{F})$]\label{sublinearK}
If $F_1 < F_2$ are in $\tilde{K}$'s domain, then $ \frac{\tilde{K}(F_1)}{F_1} \geq \frac{\tilde{K}(F_2)}{F_2} $.
\end{lemma}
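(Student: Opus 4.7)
The key reformulation is that $\tilde{K}(F)/F$ equals the infimum of $|A+A|/|\langle A\rangle|$ over affinely generating sets $A$ with $|A|/|\langle A\rangle| = 1/F$, i.e.\ the minimum \emph{relative} sumset size at density $1/F$. The lemma thus says that this quantity is non-decreasing in the density $\delta = 1/F$. Since $F_1 < F_2$ corresponds to $\delta_1 = 1/F_1 > 1/F_2 = \delta_2$, the natural strategy is: start from a near-optimal set at density $\delta_1$ and delete elements until the density drops to $\delta_2$, while keeping affine generation intact.

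Fix $\epsilon > 0$ and choose an affinely generating $A_1 \subseteq \Z_2^n$ with $|A_1|/2^n = 1/F_1$ and $|A_1+A_1|/|A_1| < \tilde{K}(F_1) + \epsilon$. By replacing $A_1$ with $A_1 \times \Z_2^m \subseteq \Z_2^{n+m}$ for sufficiently large $m$, I may assume the ambient dimension $N$ satisfies $2^N/F_2 \geq N+1$; this substitution preserves the density, the affine generation property, and the ratio $|A+A|/|\langle A\rangle|$. Now $|A_1| - 2^N/F_2$ is a positive integer, and I delete this many elements greedily. At each stage the current set $A$ affinely generates $\Z_2^N$ and has $|A| > N+1$, so it strictly contains some affine basis of $\Z_2^N$; hence at least one element lies outside this basis and can be removed without destroying affine generation. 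The process terminates at a set $A_2 \subseteq A_1$ with $|A_2| = 2^N/F_2$ and $\langle A_2\rangle = \Z_2^N$, so $A_2$ has spanning constant exactly $F_2$.

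Since $A_2 \subseteq A_1$ we have $|A_2+A_2| \leq |A_1+A_1|$, and therefore
\[\frac{\tilde{K}(F_2)}{F_2} \;\leq\; \frac{|A_2+A_2|}{2^N} \;\leq\; \frac{|A_1+A_1|}{2^N} \;=\; \frac{1}{F_1}\cdot\frac{|A_1+A_1|}{|A_1|} \;<\; \frac{\tilde{K}(F_1)+\epsilon}{F_1}.\]
Letting $\epsilon \to 0$ yields the stated inequality. The only delicate point is ensuring that the deletion step can always be carried out; this is exactly why enlarging to a big enough ambient dimension is essential, so that the target size $2^N/F_2$ stays above the affine-basis threshold $N+1$ at every intermediate stage.
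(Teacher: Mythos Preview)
Your proof is correct and follows essentially the same route as the paper's: enlarge the ambient space via $A_1 \times \Z_2^m$ so that the target size $2^N/F_2$ exceeds $N+1$, pass to a subset $A_2 \subseteq A_1$ of that size which still affinely generates, and compare $|A_2+A_2| \leq |A_1+A_1|$. The only cosmetic differences are that the paper takes the infimum over $A_1$ directly rather than using an $\epsilon$-argument, and asserts the existence of the generating subset $A_2$ in one line (since $|A_2| \geq N+1$) rather than building it by greedy deletion.
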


\begin{proof}
Let $F_2 = 2^a/b$ for some $a,b \in \N$.
Suppose $A_1 \subseteq \Z_2^n$ is an affinely generating set of size $|A_1| = 2^n/F_1$.
Let $m \in \N$ be large enough such that $a \leq n+m < b2^{n+m-a}$.
Consider $A_1' = A_1 \times \Z_2^m$, and note that $A_1'$ affinely generates $\Z_2^{n+m}$ and $|A_1'| = 2^{n+m}/F_1$.
Since $F_1 < F_2$ 
one can take a subset $A_2 \subseteq A_1'$ of cardinality $|A_2| = b2^{n+m-a} = 2^{n+m}/F_2$.
Moreover, by $m$'s choice $n+m+1 \leq |A_2|$, so a subset $A_2$ which affinely generates $\Z_2^{n+m}$ can be chosen.
Now from $A_2+A_2 \subseteq A_1'+A_1' = (A_1 + A_1) \times \Z_2^m$,
$$ \frac{|A_1+A_1|}{|A_1|} \cdot \frac1{F_1} = 
\frac{|A_1+A_1|}{2^n} = 
\frac{|A_1'+A_1'|}{2^{n+m}} \geq
\frac{|A_2+A_2|}{2^{n+m}} = 
\frac{|A_2+A_2|}{|A_2|} \cdot \frac1{F_2} \geq 
\frac{\tilde{K}(F_2)}{F_2}.$$
The task is accomplished by taking the infimum over $A_1$.
\end{proof}

\begin{cor}[Superlinearity of $F(K)$]\label{superlinearF}
$ \frac{F(K_1)}{K_1} \leq \frac{F(K_2)}{K_2}$ for every $1 \leq K_1 < K_2$. \hfill \qed
\end{cor}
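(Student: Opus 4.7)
The plan is to deduce the corollary directly from Lemma~\ref{sublinearK} via a short manipulation based on the identity $F(K) = \sup\{\tilde{F} : \tilde{K}(\tilde{F}) \leq K\}$. Fix $1 \leq K_1 < K_2$, and pick any $\tilde{F}_1$ in $\tilde{K}$'s domain with $\tilde{K}(\tilde{F}_1) \leq K_1$; by definition of $F$, the supremum of such $\tilde{F}_1$ is exactly $F(K_1)$.

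For any $\tilde{F}_2$ in the domain satisfying $\tilde{F}_1 < \tilde{F}_2 \leq K_2 \tilde{F}_1/K_1$, Lemma~\ref{sublinearK} applied to the pair $\tilde{F}_1 < \tilde{F}_2$ gives
$$ \tilde{K}(\tilde{F}_2) \;\leq\; \tilde{F}_2 \cdot \frac{\tilde{K}(\tilde{F}_1)}{\tilde{F}_1} \;\leq\; \frac{K_1 \tilde{F}_2}{\tilde{F}_1} \;\leq\; K_2, $$
so $\tilde{F}_2$ is an admissible candidate in the supremum defining $F(K_2)$, yielding $F(K_2) \geq \tilde{F}_2$. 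Since the set $\{2^a/b : a,b \in \N,\; 2^a \geq b\}$ is dense in $[1,\infty)$, one can take $\tilde{F}_2$ arbitrarily close to $K_2 \tilde{F}_1/K_1$, obtaining $F(K_2) \geq K_2 \tilde{F}_1/K_1$. Taking the supremum over all eligible $\tilde{F}_1$ then yields $F(K_2) \geq K_2 F(K_1)/K_1$, equivalently $F(K_1)/K_1 \leq F(K_2)/K_2$, as required.

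The only mild technical point is that $\tilde{K}$'s domain is the discrete set $\{2^a/b\}$ rather than all of $[1,\infty)$, so density has to be invoked twice: once to approximate $F(K_1)$ from below by eligible $\tilde{F}_1$'s, and once to take $\tilde{F}_2$ arbitrarily close to $K_2 \tilde{F}_1 / K_1$. Both follow from the same elementary observation that for any real $x > 1$ and $\varepsilon > 0$ one can take $a$ sufficiently large and $b = \lfloor 2^a/x \rfloor$. Beyond this bookkeeping the corollary is immediate from the preceding lemma, so I anticipate no real obstacle.
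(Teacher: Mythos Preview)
Your proposal is correct and follows essentially the same approach as the paper: the corollary is stated there with only a \qed and no explicit argument, signalling that it is meant as an immediate consequence of Lemma~\ref{sublinearK} together with the identity $F(K) = \sup\{\tilde{F} : \tilde{K}(\tilde{F}) \leq K\}$, and you have simply written out those details carefully (including the density bookkeeping for the domain of $\tilde{K}$).
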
 

Example~\ref{IPE2} and Lemma~\ref{sublinearK} supply an upper bound on $\tilde{K}(\tilde{F})$.
The following lemma essentially claims that this bound is sharp.

\begin{lemma}[Formula for $\tilde{K}(\tilde{F})$]\label{formulaK}
Let $\tilde{F} \geq 1$ be of the form ${2^a}/{b}$ where $a,b \in \N$,
and let $s < t$ be the unique pair of non-negative integers for which
$$ \frac{2^t}{t+1-s/2} \leq \tilde{F} < \frac{2^t}{t+1-(s+1)/2} $$
Then
$$ \tilde{K}(\tilde{F}) = \frac{\binom{t}{2}+t+1-\frac12\binom{s}{2}}{2^t} \cdot \tilde{F} $$
\end{lemma}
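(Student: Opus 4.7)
Example~\ref{IPE2} furnishes, for each admissible pair $0 \le s < t$, the set $A_{[t,s]}$ witnessing $\tilde{K}(\tilde{F}_s^t) \le \phi_s^t \cdot \tilde{F}_s^t$, where $\tilde{F}_s^t := 2^t/(t+1-s/2)$ is the left endpoint of the $(t,s)$-interval and $\phi_s^t := (\binom{t}{2}+t+1-\binom{s}{2}/2)/2^t$ is the claimed slope. Since Lemma~\ref{sublinearK} says that $\tilde{K}(\tilde{F})/\tilde{F}$ is non-increasing in $\tilde{F}$ on its domain, this estimate automatically propagates to every $\tilde{F}\in[\tilde{F}_s^t, \tilde{F}_{s+1}^t)$, giving $\tilde{K}(\tilde{F}) \le \phi_s^t \tilde{F}$.

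\textbf{Lower bound: setup and structure.} It suffices to show that every affinely generating $A \subseteq \Z_2^n$ with $|A|/2^n \in (1/\tilde{F}_{s+1}^t,\,1/\tilde{F}_s^t]$ satisfies $|A+A|/2^n \ge \phi_s^t$. Following the reduction of Theorem~\ref{newbound}, apply an invertible affine transformation to assume $E \subseteq A$, then use Lemma~\ref{sumcomp} together with induction on $\hbar(A)$ to assume $A$ is $\langle\langle E\rangle\rangle$-compressed. Lemma~\ref{structure} then decomposes $A = H \cup A_1 \cup \cdots \cup A_m$ with $H = \langle e_1, \ldots, e_h\rangle$ and $A_i \subseteq e_{h+i}+H$. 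The crucial refinement over Theorem~\ref{newbound} is that each $A_i$ is actually an initial segment of its coset: part~(3) of Lemma~\ref{structure} gives $\{1,\ldots,h,h+i\}$-compression, and $H\subseteq A$ sits at the bottom of the lex order on $H \cup (e_{h+i}+H)$, forcing $A_i$ to be an initial segment of $e_{h+i}+H$. Writing $b_i := |A_i|$ and applying Proposition~\ref{segs} inside $\Z_2^h$ yields the exact identity
\[
|A+A| \;=\; (m+1)\cdot 2^h \;+\; \sum_{i<j} b_i \circ b_j,
\]
where $\circ$ is the Hopf--Stiefel product.

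\textbf{Combinatorial minimisation.} What remains is a finite optimisation: minimise $(m+1)\cdot 2^h + \sum_{i<j} b_i \circ b_j$ over $(h, m, b_1, \ldots, b_m)$ subject to $0 < b_i < 2^h$, $b_i + b_j \le 2^h$ (Lemma~\ref{structure}(6)), and $|A|/2^n = (2^h + \sum b_i)/2^{h+m}$ lying in the prescribed interval. Guided by the compressed form of $A_{[t,s]}$, which has $h = 2$ and $b_i \in \{1,2\}$, the plan proceeds in three steps: (i) a local exchange argument, using explicit small-case Hopf--Stiefel values, showing that any $b_i \ge 3$ can be replaced by smaller entries without increasing the total; (ii) a rank-reduction step dominating any $h \ge 3$ configuration by an $h \in \{1,2\}$ configuration at the same value of $|A|/|G|$; and (iii) direct computation in the two surviving cases, where the sum collapses to $2\binom{m}{2} - \binom{u}{2}$ with $u$ the number of $b_i = 1$, and the identifications $(t,s) = (m+1,u)$ for $h = 2$ or $(t,s) = (m+1,0)$ for $h = 1$ recover $\phi_s^t \cdot 2^{h+m}$ exactly. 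The rank-reduction step is the principal obstacle, since increasing $h$ trades $|H|$ against the cross terms in a subtle way, and a careful bookkeeping of how $|A|/|G|$ moves between the $(t,s)$-intervals under such a trade is required to rule out exotic extremisers.
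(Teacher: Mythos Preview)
Your upper bound and your structural reduction to $\langle\langle E\rangle\rangle$-compressed sets are exactly the paper's, and your identity $|A+A|=(m+1)2^h+\sum_{i<j}b_i\circ b_j$ is correct. The divergence, and the gap, is entirely in the combinatorial minimisation.

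Your plan (i)--(iii) is not yet a proof. Step~(i) as written cannot precede step~(ii): with $h$ and $m$ fixed, $\sum_i b_i = |A|-2^h$ is fixed, and for $h>2$ this sum can exceed $2m$, so you cannot force all $b_i\le 2$ by any local exchange. Conversely, splitting a large $b_i$ into several small parts changes $m$ and hence $|G|=2^{h+m}$, altering $|A|/|G|$. So steps (i) and (ii) are entangled, and you yourself flag (ii) as the principal obstacle without resolving it. As it stands, the argument stops at the exact point where the difficulty lies.

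The paper sidesteps this entanglement by working at the level of \emph{partitions} rather than of $h$. First, a further round of compressions with $I=\{1,\ldots,k,h+i,h+j\}$ (which still preserve $E\subseteq A$) forces the sequence $(b_i)$ to be a \emph{compressed partition}: $b_i+b_j>2^k\Rightarrow b_i\ge 2^k$. This is a step you omit. Then the prepared Lemmas~\ref{quasidyadic} and~\ref{quasifair} show that, for each fixed $m$, the minimiser of $\sum_{i<j}b_i\circ b_j$ is the unique quasi-fair quasi-dyadic $m$-partition, whose value is computed in closed form. Only after this does the paper vary $m$: a short induction on $t-m$ (rewriting the $m$-case as an $(m{+}1)$-partition by prepending $a_0=|H|/2$ and invoking Lemma~\ref{quasifair} once more) shows $m=t-1$ is strictly optimal. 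In effect the paper never ``reduces $h$'' directly; it optimises the partition for every $m$ and then compares, which is precisely the bookkeeping you anticipated being hard.
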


\begin{figure}[thb]
\begin{center}
\includegraphics[width=1.00\textwidth,natwidth=2445,natheight=1845]{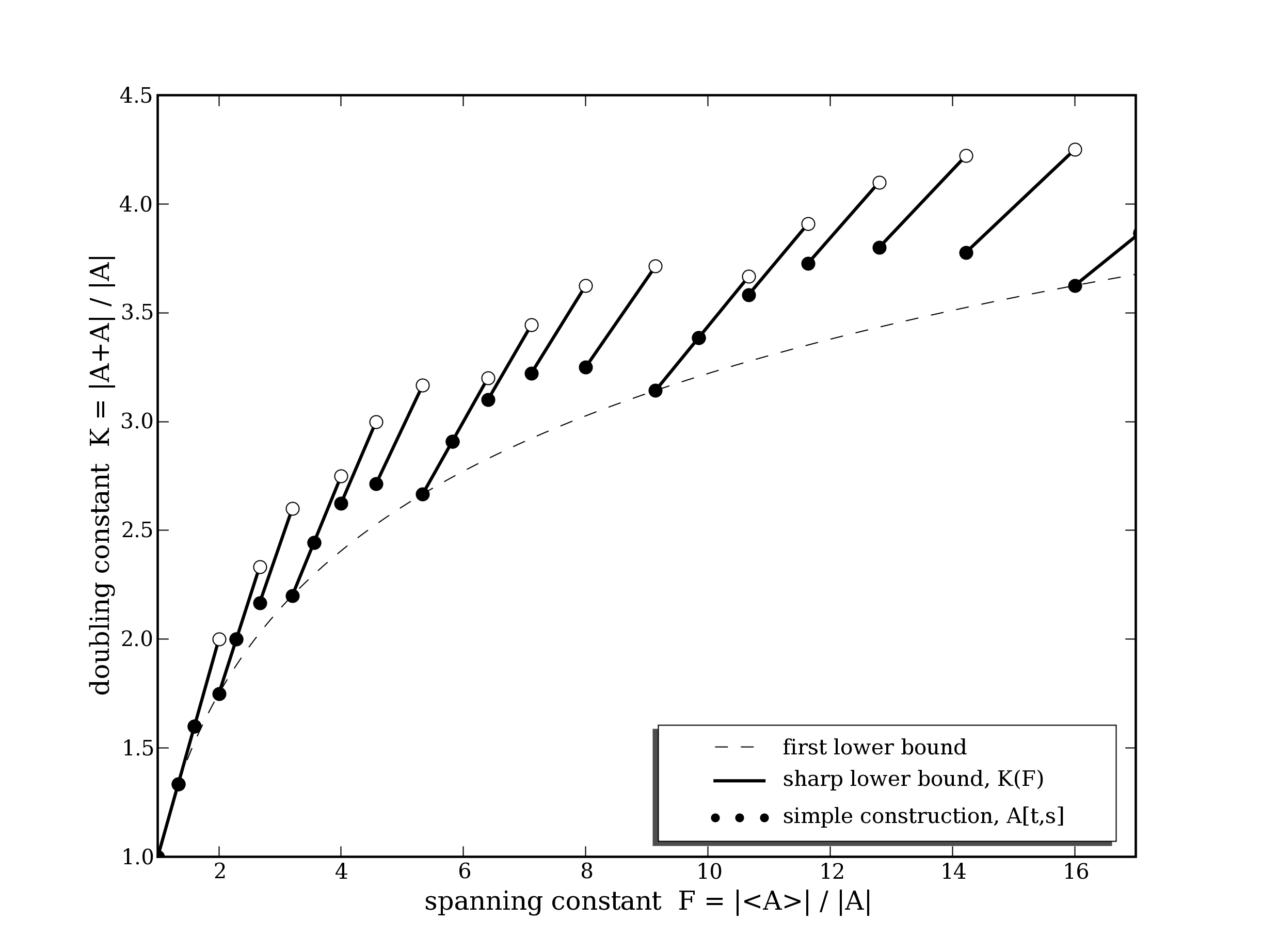}
\end{center}
\caption{An illustration of $\tilde{K}(\tilde{F})$}
\label{fig_k}
\end{figure}

Since the function $F(K)$ is basically the inverse of $\tilde{K}(\tilde{F})$,
Theorem~\ref{formulaF} is a direct consequence of Lemma~\ref{formulaK}.
Indeed, Figure~\ref{fig_f} is obtained by transposing the graph in Figure~\ref{fig_k},
and taking the maximum wherever the result is multivalued.
We omit further details.

One can notice that $\tilde{K}(\tilde{F})$ has a more complex structure than $F(K)$. 
Since Theorem~\ref{formulaF} employs the information in Lemma~\ref{formulaK} only partially, 
there may be a quicker way of calculating $F(K)$.
Nevertheless, we feel that the detailed description of $\tilde{K}(\tilde{F})$ 
is interesting in its own right, and may shed light on the non-trivial form of $F(K)$.

The proof of Lemma~\ref{formulaK} pursues the analysis in Theorem~\ref{newbound}'s proof,
involving more reduction steps which preserve $|A|$ without increasing $|A+A|$.
Through these reductions the structure of $A$ becomes similar to Example~\ref{IPE2}, 
so that its doubling constant can be calculated explicitly. 
We start with two reductions which can be formulated separately in terms of integer partitions.
All of the following will be motivated and applied later, in the proof of the lemma.

A non-increasing sequence of positive integers $a_1...a_m$ is 
an integer \emph{partition} of $a=\sum_i a_i$ into $m$ parts, and for short an $m$-\emph{partition} of $a$.
Recall the Hopf--Stiefel function $a \circ b$ from Section~\ref{toolsect}.
We are interested in the minimum of $\sum_{1 \leq i<j \leq m} a_i \circ a_j$ over all $m$-partitions of $a$.

A partition $a_1...a_m$ of $a$ is called \emph{compressed} if 
$a_i+a_j > 2^k \Rightarrow a_i \geq 2^k$ for each $k$ and $i<j$.
It will be implicit in the proof of Lemma~\ref{formulaK}, 
that at least one of the partitions that minimize $\sum_{i<j} a_i \circ a_j$ is compressed.
Here we shall restrict the discussion to compressed partitions.

A partition is called \emph{quasi-dyadic} if $a_1...a_{m-1}$ are powers of $2$.
No requirement is made on $a_m$. 
Note that a quasi-dyadic partition is always compressed.
Our first reduction basically asserts that 
the minimum of $\sum_{i<j}a_i \circ a_j$ is attained by a quasi-dyadic partition.

\begin{lemma}\label{quasidyadic}
A compressed $m$-partition of $a$ that minimizes $\sum_{i<j} a_i \circ a_j$ is quasi-dyadic.
\end{lemma}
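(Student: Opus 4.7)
The plan is to argue by contradiction: suppose $(a_1,\ldots,a_m)$ is a compressed $m$-partition of $a$ that minimizes $\sum_{p<q} a_p\circ a_q$ but is not quasi-dyadic, and produce another $m$-partition of $a$ with a strictly smaller objective. The crucial choice is to let $i$ be the \emph{least} index with $i<m$ and $a_i$ not a power of $2$, so that every $a_p$ preceding $a_i$ is already a power of $2$; write $a_i=2^k+r$ with $0<r<2^k$. Compressedness then pins down the neighbourhood of $a_i$: for each $p<i$, $a_p+a_i>2^{k+1}$ (since $a_p\ge a_i>2^k$) forces $a_p\ge 2^{k+1}$, so combined with the minimality of $i$ each such $a_p$ is a power of $2$ of size at least $2^{k+1}$; and for each $j>i$, since $a_i<2^{k+1}$ rules out $a_i+a_j>2^{k+1}$, one gets $a_j\le 2^{k+1}-a_i=2^k-r$.

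The second step is the modification: replace $a_i$ by $2^k$ and $a_m$ by $a_m+r$, keeping all other parts fixed. This is still an $m$-partition of $a$ into positive parts, and only pairs touching position $i$ or $m$ contribute a non-zero difference, so the change in objective decomposes as
$$\Delta \;=\; \sum_{p\neq i,m}\bigl(a_p\circ 2^k + a_p\circ(a_m+r) - a_p\circ a_i - a_p\circ a_m\bigr) \;+\; \bigl(2^k\circ(a_m+r) - a_i\circ a_m\bigr).$$
For $p<i$, each $a_p=2^L$ with $L\ge k+1$ dominates every Hopf--Stiefel argument appearing in its summand, so each of the four terms equals $a_p$ and the summand vanishes. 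For $i<p<m$, the bound $a_p\le 2^k-r$ and the recursion $a\circ(2^n+b)=2^n+a\circ b$ applied at $n=k$ give $a_p\circ 2^k=2^k$ and $a_p\circ(2^k+r)=2^k+a_p\circ r$, reducing the summand to $a_p\circ(a_m+r)-a_p\circ r-a_p\circ a_m$, which is $\le 0$ by the sub-distributive law. The same recursion evaluates the last bracket as $2^k-(2^k+r\circ a_m)=-\,r\circ a_m$, strictly negative since $r,a_m\ge 1$. Hence $\Delta<0$, contradicting minimality.

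The main obstacle, and the reason for insisting on $i$ being the least fractional index, is that a preceding $a_p$ that is itself fractional can contribute positively to $\Delta$: writing $a_p = q\cdot 2^{k+1}+a_p'$ with $a_p'\in(2^k,2^{k+1})$ and setting $t=a_p'-2^k$, the corresponding summand reduces to $(r+a_m)\circ t - a_m\circ t\ge 0$, which is generally strict and could spoil the sign of $\Delta$. Picking $i$ minimal forces every earlier $a_p$ to be a power of $2$ and makes those problematic summands vanish exactly, leaving only the non-positive middle terms and the strictly negative $(i,m)$-term.
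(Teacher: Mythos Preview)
Your proof is correct and follows essentially the same strategy as the paper's: pick the least index $i<m$ with $a_i$ not a power of $2$, use compressedness to bound the neighbouring parts, transfer the excess $r=a_i-2^k$ to another part, and control the change in the objective via the Hopf--Stiefel recursion and sub-distributivity. The only difference is the destination of the mass: the paper sends $r$ to $a_{i+1}$ rather than to $a_m$, which keeps the new sequence non-increasing automatically, whereas in your version $a_m+r$ may overtake $a_{m-1}$ --- harmless, since the objective $\sum_{p<q}a_p\circ a_q$ is symmetric in the parts and a reordering fixes it, but worth a word.
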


\begin{proof}
Otherwise, consider the smallest $1 \leq i < m$ for which $a_i$ is not a power of $2$,
say $2^k < a_i < 2^{k+1}$. Since the partition is compressed, $a_i + a_{i+1} \leq 2^{k+1}$.
We 'transfer mass' from $a_i$ to $a_{i+1}$.
Replace $a_i$ by $a'_i = 2^k$, and replace $a_{i+1}$ by $a'_{i+1} = a_{i+1} + a_i - 2^k$. 
Note that $a'_i \geq a'_{i+1}$ and monotonicity is preserved.

How does this move affect $\sum_{i<j} a_i \circ a_j$?
By the choice of $i$, for $j < i$ we have $a_j = 2^l$, 
where $l > k$ as a partition is non-increasing. 
Thus the terms involving $a_j$ are unchanged:
$$a_j \circ a_i + a_j \circ a_{i+1} = 2^l + 2^l = a_j \circ a'_i + a_j \circ a'_{i+1}.$$
For $j > i+1$ we know $a_j \leq a_{i+1} < 2^k$. 
By the recursive definition of the Hopf--Stiefel function, and the sub-distributive law:
\begin{align*} 
a_j \circ a_i + a_j \circ a_{i+1} &= 
2^k + a_j \circ (a_i-2^k) + a_j \circ a_{i+1} \\
&\geq 2^k + a_j \circ (a_i-2^k + a_{i+1})
= a_j \circ a'_i + a_j \circ a'_{i+1}.
\end{align*}
Finally, again by the recursive definition the mixed term becomes strictly smaller:
$$ a_i \circ a_{i+1} = 2^k + (a_i-2^k) \circ a_{i+1} > 2^k = a'_i \circ a'_{i+1} $$ 
The combination of the last three calculations yields that the sum $\sum_{i<j} a_i \circ a_j$
can be made smaller by changing the partition, in contradiction to the minimality assumption.
\end{proof}

Since $2^k \circ a = 2^k$ for $a \leq 2^k$, in the quasi-dyadic case the summation can be simplified :
$$ \sum\limits_{1 \leq i<j \leq m}a_i \circ a_j \;\;= 
\sum\limits_{1 \leq i<j \leq m}\max(a_i,a_j) \;\;= 
\sum\limits_{1 \leq i<j \leq m} a_i \;\;= 
\sum\limits_{i=1}^{m} (m-i) \cdot a_i $$

It is natural to conjecture that the minimum is obtained when $a_1...a_m$ are 'almost' equal.
A quasi-dyadic $m$-partition of $a$ is \emph{quasi-fair}
if for some $k \in \N$, $a_i \in \left\{2^k,2^{k-1}\right\}$ for each $1 \leq i \leq m - 1$.
For example $4+4+2+2+2+1$ and $4+4+4+3$ and $8+4+3$ are some quasi-fair partitions of $15$.
The following properties of quasi-fair partitions are easily verified:
\begin{enumerate}
\item In the above definition one can choose 
$$k = \left\lceil \log_2(a/m)\right\rceil,$$ 
and then exactly $a_1...a_j$ exceed $2^{k-1}$ where 
$$j = \left\lceil a/2^{k-1} \right\rceil - m.$$
\item For every two positive integers $m \leq a$, 
there exists a unique quasi-fair $m$-partition of $a$.
\item If $a_1...a_m$ and $a_1'...a_m'$ are the quasi-fair $m$-partitions of $a \leq a'$, 
then $a_i \leq a_i'$ for all $i$.
\item A sub-partition (in the sense of a sub-sequence) of a quasi-fair partition is quasi-fair.
\end{enumerate}
Now we are ready to state the second reduction.
\begin{lemma}\label{quasifair}
The minimum of $\sum_{i<j} a_i \circ a_j$ over all quasi-dyadic $m$-partitions of $a$
is obtained only by the quasi-fair one.
\end{lemma}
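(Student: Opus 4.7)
The plan is a proof by contradiction via a greedy exchange. Suppose $P=(a_1,\ldots,a_m)$ is a quasi-dyadic $m$-partition of $a$ that minimizes $\sum_{i<j}a_i\circ a_j$ but is not quasi-fair; the aim is to construct a quasi-dyadic $P'$ of $a$ with strictly smaller objective, contradicting minimality. Since every $a_i$ for $i<m$ is a power of $2$ dominating all subsequent entries, $a_i\circ a_j=a_i$ for $i<j$, so the objective collapses to $\sum_{i=1}^{m-1}(m-i)\,a_i$, with the free entry $a_m$ carrying weight zero. Writing $a_i=2^{l_i}$ for $i\le m-1$, non-quasi-fairness means the levels $l_1\ge\cdots\ge l_{m-1}$ do not all lie in two consecutive integers, so in particular $p:=l_1\ge l_{m-1}+2$.

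The core move goes as follows. Let $i^{*}$ be the last index at level $p$ and demote $a_{i^{*}}$ from $2^{p}$ to $2^{p-1}$; by the choice of $i^{*}$ the right neighbour (if any) sits at a level $<p$ and so has value $\le 2^{p-1}$, so the sequence stays non-increasing at position $i^{*}$. This releases $2^{p-1}$ units of mass, which I would reinsert greedily: while the remaining excess, if added to $a_m$, would exceed the current value at position $m-1$, promote the current minimum-level power-of-two entry at some level $l$ from $2^{l}$ to $2^{l+1}$ and subtract $2^{l}$ from the excess; once the residue fits, add it to $a_m$. Each promotion preserves quasi-dyadicity because the promoted value is still a power of $2$, and the implicit re-sort is harmless since the entry we promoted was the current minimum.

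A direct computation shows the cost change is
\[
\Delta \;=\; -(m-i^{*})\cdot 2^{p-1}\;+\;\sum_{\text{promotions}}n_{l}\cdot 2^{l},
\]
where at each promotion $n_{l}$ denotes the current multiplicity at the minimum level $l$; the contribution $n_{l}\cdot 2^{l}$ arises because the promoted entry moves from position $m-n_{l}$ to the end of the level-$(l+1)$ block in the re-sorted sequence. The key estimate, and the main obstacle I anticipate, is the invariant $n_{l}\le m-1-i^{*}$ maintained throughout the greedy: the $i^{*}$ entries originally at level $p$ keep at least $i^{*}$ entries at levels strictly above the current minimum at every intermediate step, which one verifies by a short counting argument through each of the two possible effects of a single promotion (the minimum level either retains its identity with reduced multiplicity, or jumps up by exactly one). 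Combined with $\sum 2^{l}\le 2^{p-1}$ (the total mass reinserted via promotions), this invariant yields $\sum n_{l}\cdot 2^{l}\le(m-1-i^{*})\cdot 2^{p-1}$, and therefore $\Delta\le -2^{p-1}<0$, a strict decrease contradicting minimality. The uniqueness of the quasi-fair partition noted in property~(2) preceding the lemma then supplies the ``only'' clause.
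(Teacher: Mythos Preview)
Your approach is genuinely different from the paper's. The paper argues by induction on $m$: for any minimizer, the tail $a_2,\ldots,a_m$ is (by induction) the quasi-fair $(m-1)$-partition of $a-a_1$, and then a direct comparison using the monotonicity property~(3) of quasi-fair partitions shows the weighted sum $\sum_i (m-i)a_i$ is strictly increasing in $a_1$; hence $a_1$ must be the smallest admissible power of~$2$, and the whole partition is quasi-fair. No exchange move or greedy reinsertion is needed.

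Your exchange argument is plausible but has a real gap in the verification of the invariant $n_l\le m-1-i^*$. The claimed ``short counting argument through the two possible effects of a single promotion'' does not work in the second case. When $n_l=1$ and the minimum level jumps from $l$ to $l+1$, the new multiplicity at the minimum is $k+1$, where $k$ is the number of entries that were already sitting at level $l+1$ before the promotion. The prior invariant only tells you that $m-2$ entries were strictly above $l$; it says nothing that forces $k\le m-2-i^*$. In particular, if the minimum ever reaches $p-1$, then the demoted entry $a_{i^*}$ (which you parked at level $p-1$) is now at the minimum, and only $i^*-1$ entries remain strictly above, so $n_{p-1}$ can be as large as $m-i^*>m-1-i^*$.

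What actually rescues the argument is a global mass count you did not supply: at the first moment the minimum reaches $p-1$, the configuration must be $(i^*-1)$ entries at level $p$ and $m-i^*$ entries at level $p-1$, and comparing total mass before and after shows the remaining excess satisfies
\[
a_m+\text{excess}\;=\;a_m+\sum_{i>i^*}2^{l_i}-(m-1-i^*)\,2^{p-1}\;\le\;2^{p-2}+\bigl[(m-2-i^*)2^{p-1}+2^{p-2}\bigr]-(m-1-i^*)2^{p-1}\;=\;0+2^{p-1},
\]
so the stopping condition triggers and no promotion is ever attempted at level $p-1$. With this in hand your invariant does hold at every \emph{actual} promotion (the top $i^*$ entries sit at levels $\ge p-1>$ current minimum), and the rest of your estimate goes through. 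But this is not a local step-by-step preservation; it is a termination argument, and it should be stated and proved explicitly. You may find it cleaner to adopt the paper's inductive route, which sidesteps all of this bookkeeping.
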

\begin{proof}
This lemma can be verified by induction on $m$.
For a partition that minimizes the sum, 
it is enough to show $a_1 = 2^k$ for $k = \left\lceil \log_2(a/m)\right\rceil$.
By the induction hypothesis $a_2...a_m$ are quasi-fair,
and thus constitute the unique quasi-fair sub-partition we are looking for.
By the monotonicity property applied on $a_2...a_m$, 
for a competing sequence $a'_1...a'_m$ with $a'_1 > a_1$,
necessarily $a'_i \leq a_i$ for $i \geq 2$, and consequently:
\begin{align*} 
\sum\limits_{i=1}^{m} (m-i) \cdot a_i &= 
\sum\limits_{i=1}^{m} (m-i) \cdot a_i + (m - 1) \left(\sum\limits_{i=1}^{m}a'_i - 
\sum\limits_{i=1}^{m}a_i\right) \\
&= \sum\limits_{i=1}^{m} (m-i) \cdot a'_i - \sum\limits_{i=2}^{m}(i-1) (a_i - a'_i) < 
\sum\limits_{i=1}^{m} (m-i) \cdot a'_i 
\end{align*}
\end{proof}

With these reductions in hand, we can complete the calculation of $\tilde{K}(\tilde{F})$.

\begin{proof}(of Lemma~\ref{formulaK})
Let $\left\langle A \right\rangle = G = \Z_2^n$. 
Lemma~\ref{formulaK} is proved by showing the following lower bound on $|A+A|$, 
which is reached by Example~\ref{IPE2} and Lemma~\ref{sublinearK}:
\begin{equation}\label{setting} 
\frac{t+1-(s+1)/2}{2^t} < \frac{|A|}{|G|} \leq \frac{t+1-s/2}{2^t} \;\;\;\; \Rightarrow \;\;\;\;
\frac{|A+A|}{|G|} \geq \frac{\binom{t}{2}+t+1-\frac12\binom{s}{2}}{2^t}
\end{equation}
If $|A| > \frac12 |G|$, then by the pigeonhole principle $A+A=G$, as required in the cases $t=1,2$.
Hence we may assume $|A| \leq \frac12 |G|$ and $t \geq 3$.

We start as in Theorem~\ref{newbound}.
We first assume without loss of generality that $A$ is $\langle\langle E \rangle\rangle$-compressed, 
and therefore by Lemma~\ref{structure} has the following properties:

\begin{itemize}
\item
There exists a subgroup 
$H = \left\langle 0,e_1,...,e_h \right\rangle$ such that $A = H \cup A_1 \cup A_2 \cup ... \cup A_m$,
where $A_i = A \cap (e_{h+i} + H)$ and $m = \mathrm{codim}\;H$.
\item
$1/2^m < |A|/|G| \leq (1+\frac{m}{2})/2^m$.
By the assumptions $\frac{t+2}{2^{t+1}} < |A|/|G| \leq \frac12$, we can write $1 < m < t$.
\item 
Each $A_i$ is a lexicographic initial segment of $e_i + H$.
Therefore $A$ is uniquely determined by the sequence $a_1,...,a_m$ where $a_i = |A_i|$. 
Note that $0 < a_i < 2^h$.
\item
By shift-minimality $a_1 \geq a_2 \geq ... \geq a_m$.
In other words, $a_1,...,a_m$ is a partition of $a = |A| - {|G|}/{2^m}$.
\end{itemize}

As in Theorem~\ref{newbound},
we use these properties to write $A+A$ as a disjoint union of its intersections with $H$-cosets, 
which are of three forms: $H$, $H+A_i$ and $A_i+A_j$.
Since the $A_i$'s are initial segments of their cosets, 
the sumsets of the third form can be expressed via the Hopf--Stiefel function:
$$ |A+A| = |H| + m|H| + \sum\limits_{1 \leq i<j \leq m}|A_i+A_j| = 
\frac{m+1}{2^m}\cdot|G| + \sum\limits_{1 \leq i<j \leq m}a_i \circ a_j $$
This equation makes it interesting to find partitions $a_1...a_m$ of $a$, 
that minimize $\sum_{i<j} a_i \circ a_j$.

We next show that the partition $a_1...a_m$ is compressed.
For $i<j$ and $1 \leq k \leq h$ we exclude the case where $a_i < 2^k < a_i+a_j$
by the assumption that $A$ is already $\langle\langle E \rangle\rangle$-compressed.
For $I = \{1,2,...,k,h+i,h+j\}$, let's examine the set $C_I(A)$.
$A_i$ is replaced by an initial segment of $e_{h+i}+H$ of size $2^k$,
and $A_j$ is replaced by an initial segment of $e_{h+j}+H$ of size $a_i + a_j - 2^k$, 
which is not empty by assumption.
In other words, $a_i$ becomes $2^k$, and $E \subseteq A$ is preserved.

By Lemmas~\ref{quasidyadic}-\ref{quasifair}, if $|A+A|$ is minimal
then $a_1...a_m$ is the quasi-fair quasi-dyadic $m$-partition of $a = |A| - {|G|}/{2^m}$.
In this situation
$$
|A+A| - \frac{m+1}{2^m}\cdot|G| = 
\sum\limits_{i=1}^j(m-i) \cdot 2^k + \sum\limits_{i=j+1}^{m}(m-i) \cdot 2^{k-1} = 
\left[\binom{m}{2} - \frac12 \binom{m-j}{2}\right]2^k
$$
for $0 \leq j \leq m$ and $0 < k < (\dim G - m)$ such that 
$$ \frac{m+j-1}{2} \cdot 2^k \;<\; |A|-\frac{|G|}{2^m} \;\leq\; \frac{m+j}{2} \cdot 2^k .$$

\begin{rmk*}
Note that in the cases $j=m-1$ or $j=m$ we can choose $j'=0$ and $k'=k+1$ as well.
We could avoid this freedom of choice by not permitting $j=0$, 
but since it does not affect the resulting $|A+A|$, we allow both ways.
\end{rmk*}

All that remains now is to show that, as in Theorem~\ref{newbound}, 
to minimize $|A+A|$ we should make $m$ as large as possible, i.e., $m=t-1$.
The proof is by induction on $t-m$:

\begin{itemize}

\item
Suppose $m=t-1$. We check that (\ref{setting}) holds.
$$ \frac{2m - (s+1)}{2} \cdot\frac{|G|}{2^{m+1}} \;<\; 
|A|-\frac{|G|}{2^{m}} \;\leq\; \frac{2m - s}{2}\cdot\frac{|G|}{2^{m+1}} $$
Denote $k = \dim G - m - 1$ and $j = m - s$, and observe that $0 \leq j\leq m$.
Then the above expression for the minimal $|A+A|$ becomes
$$ |A+A| = \frac{m+1}{2^m}\cdot|G| + 
\left[\binom{m}{2} - \frac12 \binom{m - \left(m - s\right)}{2}\right] \cdot  \frac{|G|}{2^{m+1}}
= \frac{\binom{t}{2} + t + 1 - \frac12\binom{s}{2}}{2^t} \cdot |G| $$

\item 
Suppose $m<t-1$. 
The above discussion yields a compressed set $A$, 
such that $a_1...a_m$ is the quasi-fair quasi-dyadic partition of $|A|-|H|$,
and $|A+A|$ is minimal given $m$, and equals:
$$ |A+A| = \frac{m+1}{2^m}|G| + \sum_{1 \leq i<j \leq m}a_i \circ a_j $$
We show that increasing $m$ makes $|A+A|$ smaller.
Denote by $A'$, $H'$ and $a_1'...a_{m+1}'$ the corresponding set, subgroup and partition for $m'=m+1$.
Similarly:
$$ |A'+A'| = \frac{m+2}{2^{m+1}}|G| + \sum_{1 \leq i<j \leq m+1}a_i' \circ a_j' $$
Now define $a_0 = |H'| = |H|/2 = |G|/2^{m+1}$.
Since $a_1...a_m$ is a quasi-dyadic $m$-partition for $m>1$ and $a_1 < |H|$, 
necessarily $a_0 \geq a_i$ and $a_0 \circ a_i = a_0$ for all $1 \leq i \leq m$.
Hence for the quasi-dyadic $(m+1)$-partition $a_0...a_m$:
$$ |A+A| = \left(\frac{m+1}{2^m} - \frac{m}{2^{m+1}}\right)|G| + m \cdot a_0 + \sum_{1 \leq i<j \leq m}a_i \circ a_j = 
\frac{m+2}{2^{m+1}}|G| + \sum_{0 \leq i<j \leq m}a_i \circ a_j $$
But by Lemma~\ref{quasifair}, the partition $a_1'...a_{m+1}'$ gives the minimal value for this expression.
Moreover, since $a_0 = |H'| > a_1'$ these partitions differ and $|A'+A'| < |A+A|$.
\end{itemize}
\end{proof}

\begin{rmk*}
An examination of the proof reveals two kinds of reduction steps. 
Either $A$ is compressed without changing $\left\langle A \right\rangle$, 
or we find a set $A'$ where $|A'|=|A|$ and $|A'+A'|$ is substantially smaller than $|A+A|$.
Hence, the proof actually provides a characterization of the extremal case, 
up to compressions that preserve $\left\langle A \right\rangle$ and $|A+A|$.
\end{rmk*}

\section{Addition of two different sets}\label{diffsect}

What is the smallest possible cardinality of $A+B$ 
if $A,B \subseteq G = \Z_2^n$ are two affinely spanning subsets of given cardinalities? 
In this section we prove Theorem~\ref{AB}, which gives an essentially complete answer. 
In addition we establish a new isoperimetric inequality, which is used in the proof.
But first, we make some remarks concerning the theorem.

\begin{figure}[hbt]
\begin{center}
\includegraphics[width=1.00\textwidth,natwidth=2445,natheight=1845]{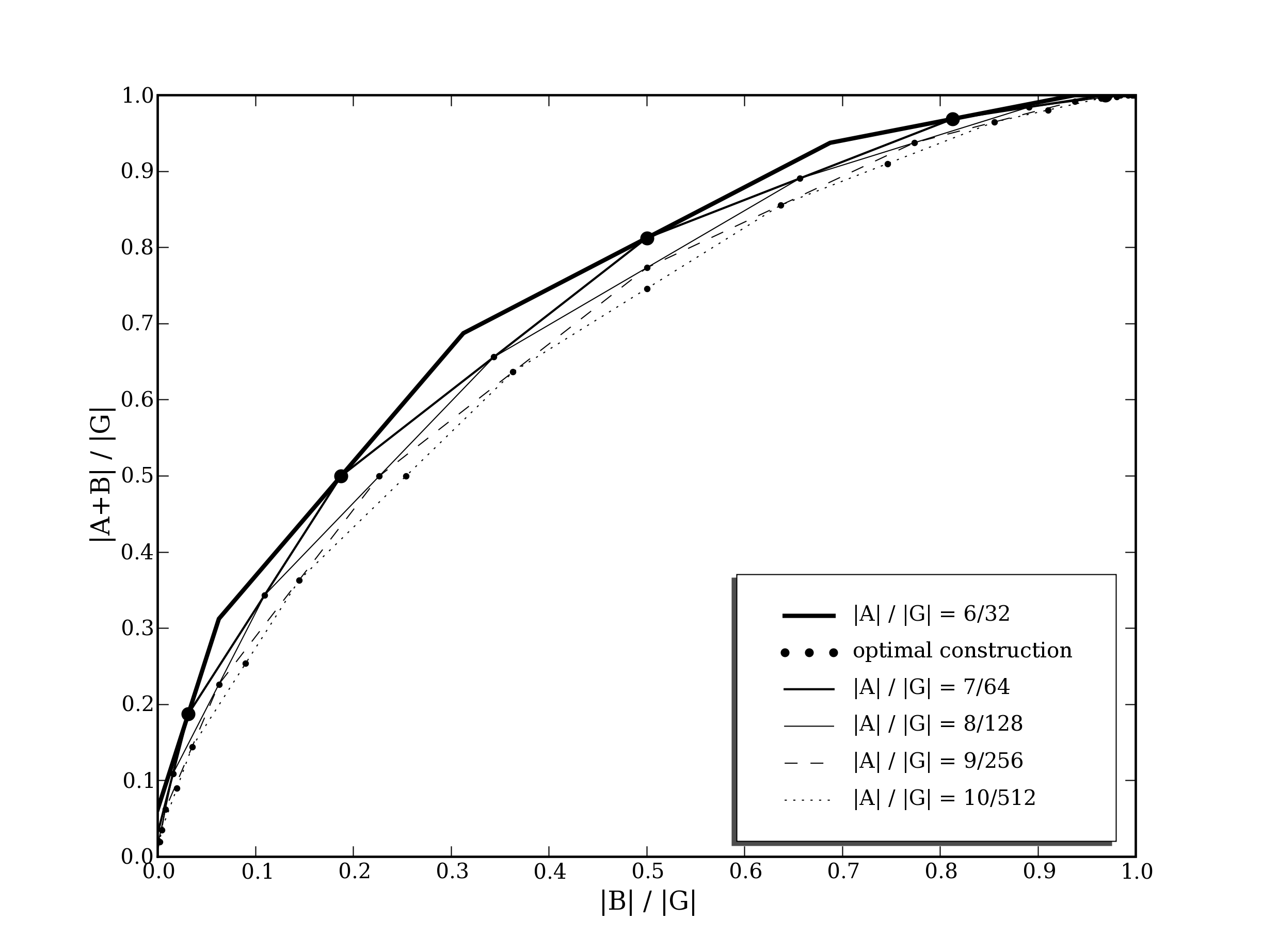}
\end{center}
\caption{An illustration of the lower bound}
\label{fig_ab}
\end{figure}

\begin{rmks*}[on Theorem~\ref{AB}]
$ $
\begin{enumerate}
\item 
Tightness: Consider $\left(|A|/|G|,|B|/|G|,|A+B|/|G|\right)$ as a point in $[0,1]^3$.
The Hamming balls construction shows that the bound goes through the points of the form:
$$ \left( \;\frac{1+t}{2^t}\;, \;\;\frac{1+t+...+\binom{t}{k}}{2^t}\;, \;\;\frac{1+t+...+\binom{t}{k+1}}{2^t} \;\right) \;\;\;\;\;\;\;\;\;\;\; 0 \leq k < t $$
An inspection of Figure~\ref{fig_ab} shows that 
all points properly inside their convex hull are strictly below the bound,
and hence cannot be realized by such sets.
In other words, further improvements of the bound will be local in nature.
\item 
The formulation of the theorem apparently breaks the symmetry 
and doesn't require $\left\langle B \right\rangle = G$.
Still, there is an asymmetry in the result as well, 
and the theorem is of interest mostly when $|A| \leq |B|$.
See also the remark after the proof.
\item
In order to simplify the statement of the theorem, 
$t$ is defined as the largest positive integer such that $|A|/|G| \leq (t+1)/2^t$.
However, the only assumption on $t$ which the proof actually uses is: 
$$\frac{t+2}{2^{t+1}} < \frac{|A|}{|G|}$$
The theorem can, therefore, be applied as well with $t$ larger than in the given formulation.
As Figure~\ref{fig_ab} shows, the resulting bound would be weaker, but
may still be useful in certain contexts.
\end{enumerate}
\end{rmks*}

Theorem~\ref{AB} implies that
a large enough number of large enough affinely generating sets must add up to the whole group:

\begin{cor}
\label{repeated}
Suppose that
$\left\langle A_1\right\rangle = \left\langle A_2\right\rangle = ... = \left\langle A_m\right\rangle = G = \Z_2^n$
with $|A_i|/|G| > (m+2)/2^{m+1}$ for all $i$. Then $A_1 + A_2 + ... + A_m = G$.
\end{cor}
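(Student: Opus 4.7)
The plan is to iterate Theorem~\ref{AB} with the choice $t = m$, tracking the partial sums $B_k := A_1 + \cdots + A_k$ through a sequence of Hamming-ball-like thresholds until reaching $|G|$. Set $w_\star := -m/(2(m-1))$ (well-defined for $m \geq 2$; the $m=1$ case reduces to a trivial inspection) and
$$
\sigma_k \;:=\; \frac{\binom{m}{0} + \binom{m}{1} + \cdots + \binom{m}{k} + w_\star \binom{m-1}{k}}{2^m}.
$$
A short algebraic check gives $\sigma_1 = (m+2)/2^{m+1}$ and $\sigma_m = 1$ (the latter because $\binom{m-1}{m} = 0$). The plan is to prove by induction on $k$ that $|B_k|/|G| \geq \sigma_k$; the case $k = m$ then delivers $B_m = G$.

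The base case is the hypothesis $|B_1|/|G| = |A_1|/|G| > (m+2)/2^{m+1} = \sigma_1$. For the inductive step I apply Theorem~\ref{AB} to $A = A_{k+1}$, $B = B_k$, with $t = m$. The assumption $|A_{k+1}|/|G| > (m+2)/2^{m+1} = (t+2)/2^{t+1}$ is exactly the relaxed condition highlighted in the third of the remarks following Theorem~\ref{AB}, so the theorem applies with $t = m$ rather than with the canonical value determined by $(t+1)/2^t$. The secondary hypothesis $|A_{k+1}| \leq \tfrac{3}{4}|G|$ I enforce at the outset by replacing each $A_i$ with an affinely generating subset $A_i' \subseteq A_i$ satisfying $(m+2)/2^{m+1} \cdot |G| < |A_i'| \leq \tfrac{3}{4}|G|$ (possible for $m \geq 2$, since then $(m+2)/2^{m+1} < \tfrac{3}{4}$ and $A_i$ contains an affine basis of $G$); this only shrinks the final sumset, so proving the conclusion for the primed sets suffices.

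In Theorem~\ref{AB}'s parametrization with $t = m$, the value $\sigma_k$ corresponds precisely to the pair $(k, w_\star)$, and the theorem's conclusion then reads $|A_{k+1} + B_k|/|G| \geq \sigma_{k+1}$ --- provided $|B_k|/|G|$ equals $\sigma_k$ exactly. To upgrade the inductive hypothesis from equality to inequality I verify that the bound in Theorem~\ref{AB} is monotone non-decreasing in $|B|/|G|$: within each fixed-$k$ piece the slope of output over input equals $\binom{t-1}{k+1}/\binom{t-1}{k} \geq 0$, and Pascal's identity forces adjacent pieces to glue continuously at the boundary $(k, w{=}1) = (k{+}1, w{=}{-}1)$, so the full piecewise-linear bound is continuous and non-decreasing.

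The main obstacle is this monotonicity verification together with the subset reduction enforcing $|A_i| \leq \tfrac{3}{4}|G|$; the rest (the algebraic identities $\sigma_1 = (m+2)/2^{m+1}$ and $\sigma_m = 1$, and the appeal to the remark to justify using $t = m$) is routine bookkeeping.
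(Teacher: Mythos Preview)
Your argument is correct for $m\ge 2$ and follows the paper's core strategy of iterating Theorem~\ref{AB}, but with different bookkeeping. The paper takes $t=m+1$ rather than $t=m$: with that choice the running thresholds are the clean Hamming-ball values $\bigl(\sum_{j\le i}\binom{m+1}{j}\bigr)/2^{m+1}$, i.e.\ $k=i-1$ and $w>0$ at each step, and the endgame is pigeonhole ($|A_1+\cdots+A_{m-1}|+|A_m|>|G|$) rather than your $\sigma_m=1$. Your fixed-$w_\star$ ladder is a perfectly valid alternative, and you are more explicit than the paper about enforcing $|A|\le\tfrac34|G|$ via the subset replacement.

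One small correction: the $m=1$ case is not a ``trivial inspection''---the statement actually fails there (e.g.\ $A_1=\Z_2^3\setminus\{e_1+e_2+e_3\}$ has $|A_1|/|G|=7/8>3/4$ and $\langle A_1\rangle=G$ but $A_1\ne G$). Both your argument and the paper's should be read as assuming $m\ge 2$; your proposal already restricts to that range everywhere else, so just drop the parenthetical.
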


\begin{proof}
We repeatedly apply the theorem with  $A = A_i$ and $B = A_1 + ... + A_{i-1}$ for all $1 \leq i \leq m$ to conclude
$$ \frac{|A_1 + A_2 + ... + A_i|}{|G|} 
> \frac{\binom{m+1}{0} + \binom{m+1}{1} + ... + \binom{m+1}{i}}{2^{m+1}} $$
Indeed, in view of remark 3 above and the assumption on the cardinalities, we may choose $t = m+1$,  
and then $k = i-1$ and $w > 0$ by the induction hypothesis.
Since $|A_1 + A_2 + ... + A_{m-1}| + |A_m| > |G|$
the proof is completed by the pigeonhole principle, $|A|+|B|>|G| \;\Rightarrow\;A+B=G$.
\end{proof}

The special case of Corollary~\ref{repeated} where all $A_i$ are identical
is due to Lev~\cite{lev_repeated}, following a conjecture of Zemor~\cite{zemor_repeated}.
Taking $A_i = D_1^{m+1} \times \Z_2^{n-m+1}$ for each $i$ shows that the assumption on the cardinalities is sharp.

\subsection{An Isoperimetric Inequality}

We are inspired by Frankl's short inductive proof~\cite{frankl} of Harper's theorem~\cite{harper}.

\begin{thm}[Harper's Inequality]\label{harperthm} Suppose $A \subseteq \Z_2^n$.\\
If for $1 \leq k \leq n$ integer and $0 \leq p \leq 1$ real
$$ |A| = \binom{n}{n} + \binom{n}{n-1} + ... + \binom{n}{k+1} + p \binom{n}{k} $$
then
$$ |A+D_1^n| \geq \binom{n}{n} + \binom{n}{n-1} + ... + \binom{n}{k} + p \binom{n}{k-1} $$
\end{thm}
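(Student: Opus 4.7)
The plan is to adapt Frankl's inductive proof of Harper's vertex-isoperimetric inequality~\cite{frankl}, inducting on $n$. The base case $n = 1$ is immediate. For the inductive step, I would first compress $A$: since $D_1^n$ is itself $\{i\}$-compressed for every $i$, the Sumset Compression Lemma~\ref{sumcomp} yields $|C_{\{i\}}(A) + D_1^n| \leq |A + D_1^n|$ while preserving $|A|$. Iterating over $i$, I may assume $A$ is a downset in $\Z_2^n$. Writing $A = A_0 \sqcup (e_n + A_1)$ with $A_0, A_1 \subseteq \Z_2^{n-1}$, the downset property forces $A_1 \subseteq A_0$, and a direct split of $A + D_1^n$ into its $\{x_n = 0\}$ and $\{x_n = 1\}$ halves gives
\begin{equation*}
|A + D_1^n| \;=\; |A_0 + D_1^{n-1}| \;+\; |A_0 \cup (A_1 + D_1^{n-1})|.
\end{equation*}

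The key algebraic input is Pascal's rule. Writing $f_n(k,p)$ and $g_n(k,p)$ for the hypothesis and target quantities of the theorem, a short calculation yields
\begin{equation*}
f_n(k,p) = f_{n-1}(k-1,p) + f_{n-1}(k,p), \quad g_n(k,p) = g_{n-1}(k-1,p) + g_{n-1}(k,p),
\end{equation*}
along with the useful coincidence $f_{n-1}(k-1,p) = g_{n-1}(k,p)$. I would then case-split on $|A_0|$. If $|A_0| \geq f_{n-1}(k-1,p)$, the inductive hypothesis applied to $A_0$ gives $|A_0 + D_1^{n-1}| \geq g_{n-1}(k-1,p)$, while the trivial inclusion $A_0 \subseteq A_0 \cup (A_1 + D_1^{n-1})$ combined with $|A_0| \geq f_{n-1}(k-1,p) = g_{n-1}(k,p)$ bounds the second term below by $g_{n-1}(k,p)$, and the two summands add to exactly $g_n(k,p)$. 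If instead $|A_0| < f_{n-1}(k-1,p)$, so that $|A_1| > f_{n-1}(k,p)$, I would bound $|A_0 \cup (A_1 + D_1^{n-1})| \geq |A_1 + D_1^{n-1}|$ and invoke the inductive hypothesis on each of $A_0, A_1$ separately, reducing the job to showing $\phi_{n-1}(|A_0|) + \phi_{n-1}(|A_1|) \geq g_n(k,p)$, where $\phi_{n-1}$ is the Harper bound.

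The main obstacle is this last inequality, in the unbalanced regime. I expect it to follow from the concavity of $\phi_{n-1}$ as a piecewise-linear function of set size, which itself reduces to log-concavity of the binomial coefficients: within Hamming layer $k'$ the slope of $\phi_{n-1}$ is $\binom{n-1}{k'-1}/\binom{n-1}{k'}$, and log-concavity makes these slopes non-increasing as $|A|$ grows. Given concavity, $\phi_{n-1}(|A_0|) + \phi_{n-1}(|A_1|)$ under the constraint that $|A_0| + |A_1|$ is fixed is minimized at the most spread-apart configuration; in our range the extremal value is attained at $(|A_0|, |A_1|) = (f_{n-1}(k-1,p), f_{n-1}(k,p))$, where the sum equals exactly $g_n(k,p)$. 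Since Case 2 forces the split to be strictly less spread than this boundary, the required inequality holds and the induction closes.
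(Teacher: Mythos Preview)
The paper does not actually prove Theorem~\ref{harperthm}; it is stated as a classical result with a pointer to Frankl's inductive proof~\cite{frankl}. The natural benchmark is the paper's proof of the closely related Proposition~\ref{iso1}, which follows Frankl.

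Your argument is correct, and it diverges from Frankl's in one interesting place. Both approaches induct on $n$ and split $A$ into halves $A_0=A^-$ and $A_1=A^+$. Frankl (and the paper, in proving Proposition~\ref{iso1}) first shifts $A$ to be \emph{shift-minimal}; the payoff is Lemma~\ref{frankl_lemma}(1), giving $A_0 \supseteq \delta A_1 \cup \{0\}$, so that the second half of $A+D_1^n$ collapses to $A_0$ and the formula becomes simply $|A+D_1^n| = |A_0+D_1^{n-1}| + |A_0|$. The dichotomy from applying induction to $A_1$ then shows $|A_0|$ is large in either branch, and a single further application of induction to $A_0$ finishes. You instead only push $A$ down to a downset, keep the honest formula $|A+D_1^n| = |A_0+D_1^{n-1}| + |A_0 \cup (A_1+D_1^{n-1})|$, and handle your Case~2 by applying induction separately to $A_0$ and to $A_1$ and invoking concavity of the Harper function $\phi_{n-1}$: its slope on layer $k'$ is $k'/(n-1-k')$, which is decreasing in the set size, and the downset inclusion $A_1\subseteq A_0$ pins both $|A_0|,|A_1|$ strictly inside $(f_{n-1}(k,p),\,f_{n-1}(k-1,p))$, so the spread-out endpoint $(f_{n-1}(k-1,p),f_{n-1}(k,p))$ gives the minimum. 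This works. The trade-off: you avoid shifts and Lemma~\ref{shiftshadows} entirely, at the cost of a convexity step that does not obviously extend to the averaged, multi-set setting of Proposition~\ref{iso1}, where the paper's shift-based route is what makes the antichain condition usable.
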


In simple terms this theorem says that Hamming balls solve the vertex-isoperimetric problem in the hypercube.
However, it also deals, to varying degrees depending on the version of the
theorem, with sets of cardinalities strictly between $|D_{k-1}^n|$ and $|D_{k}^n|$.
A stronger version would replace the last summand of each expression with
$\binom{x}{k}$ and $\binom{x}{k-1}$ respectively, where $x \in [k,n]$ is real.
The optimal formulation due to Katona~\cite{katona} and Kruskal~\cite{kruskal} is stated in terms of
the $k$-cascade representations $\binom{a_k}{k} + \binom{a_{k-1}}{k-1} + ...$ 
and $\binom{a_k}{k-1} + \binom{a_{k-1}}{k-2} + ...$ respectively.
Frankl's method yields all three formulations.

Frankl's proof employs several useful operators on set-systems.
As usual, we freely move between the set-theoretic terminology of $2^{[n]}$ and the algebraic language of $\Z_2^n$.
The push-down operator $T_i$ and the shift operator $S_{ij}$ have already appeared in Section~\ref{toolsect}.
The upper and the lower \emph{shadow} operators act on a set-system ${\cal F} \subseteq 2^{[n]}$ by
\begin{align*}
\delta \mathcal{F} &= \left\{ J \cup \{i\} \;\left|\; J \in \mathcal{F}, i \notin J \right. \right\} \\
\partial \mathcal{F} &= \left\{ J \setminus \{i\} \;\left|\; J \in \mathcal{F}, i \in J \right. \right\}
\end{align*}
respectively.
For downsets, the notion of the shadow is close to that of the neighborhood in the theorem.
If $C \subseteq \Z_2^n$ is a non-empty downset, then $C + D_1^n = \delta C \cup \{0\}$.
Note that always $0 \notin \delta A$.
Another useful operation on set-systems is \emph{classification} by~$n$, denoted by:
\begin{align*}
\mathcal{F}^- &= \left\{ J \;\left|\; J \in \mathcal{F}, n \notin J \right. \right\} \\
\mathcal{F}^+ &= \left\{ J \setminus \{n\} \;\left|\; J \in \mathcal{F}, n \in J \right. \right\}
\end{align*}
When $A \subseteq \Z_2^n$, we regard $A^+$ and $A^-$ as subsets of $\Z_2^{n-1}$.

Following Frankl~\cite{frankl}, we proceed with two lemmas regarding properties of shifts and shadows.

\begin{lemma}\label{frankl_lemma}
Suppose $C \subseteq \Z_2^n$ is a shift-minimal downset. 
\begin{enumerate}
\item[(1)] $\delta (C^+) \subseteq (\delta C)^+ = C^-$ with equality iff $C = \varnothing$
\item[(2)] $\delta (C^-) = (\delta C)^-$
\end{enumerate}
\end{lemma}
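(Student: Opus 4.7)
The plan is to unpack the definitions carefully; both statements are essentially bookkeeping, and the only nontrivial ingredient is the use of shift-minimality in part~(1).

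For part~(1), I would first establish the identity $(\delta C)^+ = C^- \cup \delta(C^+)$ by direct computation. A subset $L \subseteq [n-1]$ lies in $(\delta C)^+$ iff $L \cup \{n\} \in \delta C$, that is, $L \cup \{n\} = K \cup \{i\}$ for some $K \in C$ and $i \in [n]\setminus K$. Splitting on the cases $i = n$ and $i < n$ gives $L = K \in C^-$ in the former, and $L = (K\setminus\{n\}) \cup \{i\} \in \delta(C^+)$ with $n \in K$ in the latter. To upgrade this identity to $(\delta C)^+ = C^-$, I would invoke shift-minimality to show $\delta(C^+) \subseteq C^-$: a typical element of $\delta(C^+)$ has the form $J \cup \{i\}$ with $J \in C^+$ (so $J \cup \{n\} \in C$) and $i \in [n-1]\setminus J$; since $C$ is $S_{in}$-invariant, $J \cup \{i\} \in C$, and since $n \notin J \cup \{i\}$ this element lies in $C^-$.

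For the equality clause, the direction $C = \varnothing \Rightarrow$ equality is trivial (both sides are empty), and the converse follows by noting that $\delta$ always enlarges a set, so $\varnothing \notin \delta(C^+)$, whereas if $C$ is a non-empty downset then $\varnothing \in C^- = (\delta C)^+$, forcing strict inclusion.

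Part~(2) is cleaner. Unwinding definitions, a set $L$ lies in $(\delta C)^-$ iff $L \in \delta C$ and $n \notin L$; writing $L = K \cup \{i\}$ with $K \in C$, $i \in [n] \setminus K$, the condition $n \notin L$ forces both $n \notin K$ (so $K \in C^-$) and $i \neq n$ (so $i \in [n-1]\setminus K$), giving $L \in \delta(C^-)$. The reverse containment is immediate, as $\delta(C^-)$ adds only indices from $[n-1]$ to subsets of $[n-1]$. The only subtlety here is keeping track of the ambient index set ($[n]$ versus $[n-1]$) on which $\delta$ acts at each step. I expect no real obstacles in either part; shift-minimality is the sole structural input, and it enters in exactly one place.
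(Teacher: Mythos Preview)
Your proposal is correct and follows essentially the same approach as the paper: both arguments unpack the operators in terms of ``add $e_i$, remove $e_n$'' and invoke shift-minimality (the swap $S_{in}$) at the single point where it is needed, then observe $\varnothing \in C^- \setminus \delta(C^+)$ for the strict inclusion. Your intermediate identity $(\delta C)^+ = C^- \cup \delta(C^+)$ is a slightly more explicit packaging of the same case split ($i=n$ versus $i<n$) that the paper makes, but the substance is identical.
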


\begin{proof}
Examine the effect of the operators on the representation of some $x \in C$ 
with the standard basis $e_1,...,e_n$.

In both $\delta (C^+)$ and $(\delta C)^+$, some $e_i$ is added and $e_n$ is removed.
However, in $\delta (C^+)$ certainly $i \neq n$ since $C^+$ lives in $\Z_2^{n-1}$,
while in $(\delta C)^+$ it is possible that $i=n$.
Hence $\delta (C^+) \subseteq (\delta C)^+$.
By shift-minimality $C$ is closed under these swaps, thus $(\delta C)^+ \subseteq C^-$.
Moreover, every element of $C^-$ is obtained by adding $e_n$ and then deleting it, so there is equality.
However, $\delta (C^+)$ is strictly smaller since $0 \in C^- \setminus \delta (C^+)$ unless $C$ is empty.

For $\delta (C^-) = (\delta C)^-$, 
note that both sets consist of elements of the form $x + e_i$ for $x \in C$ and $i < n$,
where $e_i$ and $e_n$ do not appear in $x$'s standard representation.
\end{proof}

The following lemma is well known. See e.g.~\cite{frankl_short,katona_intersection}.
Here we prove it as a special case of the compression machinery. 

\begin{lemma}\label{shiftshadows}
For all $A \subseteq \Z_2^n$ and $1 \leq i, j \leq n$ such that $i \neq j$,
\begin{enumerate}
\item[(1)] $\delta \left(S_{ij} A \right) \subseteq S_{ij}\left( \delta A \right)$
\item[(2)] $\partial \left(S_{ij} A \right) \subseteq S_{ij}\left( \partial A \right)$
\end{enumerate}
\end{lemma}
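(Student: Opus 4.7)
The plan is to carry out an element-wise analysis in the orbit-by-orbit spirit of the compression machinery, hinging on an explicit membership characterization of the shift: a set $X \subseteq [n]$ lies in $S_{ij}(\mathcal{F})$ iff either (i) $X \cap \{i,j\} \in \{\emptyset,\{i,j\}\}$ and $X \in \mathcal{F}$; or (ii) $X \cap \{i,j\}=\{i\}$ and at least one of $X,\ (X\setminus\{i\})\cup\{j\}$ is in $\mathcal{F}$; or (iii) $X \cap \{i,j\}=\{j\}$ and both $X$ and $(X\setminus\{j\})\cup\{i\}$ are in $\mathcal{F}$.

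For part (1), I would take an arbitrary $Y = X \cup \{k\} \in \delta(S_{ij}A)$ with $X \in S_{ij}A$ and $k \notin X$, and verify the corresponding clause (i)--(iii) placing $Y$ in $S_{ij}(\delta A)$. Splitting on which clause produced $X$ and on whether $k$ is $i$, $j$, or neither yields a short finite case list. In the generic sub-cases $X \in A$ and $X \cup \{k\}$ serves as an immediate shadow witness. The two subtle branches are: when $Y$ has type $\{j\}$, clause (iii) demands a second witness $(Y\setminus\{j\})\cup\{i\} \in \delta A$, supplied by $X \cup \{i\}$ (if $k = j$) or by $X' \cup \{k\}$ where $X' := (X\setminus\{j\})\cup\{i\} \in A$ comes from clause (iii) applied to $X$ itself; and when $X$ entered $S_{ij}A$ by the swap in clause (ii) with $X \notin A$, the hidden source $X'' := (X\setminus\{i\})\cup\{j\} \in A$ plays the role of shadow origin for $Y$ or its swap partner.

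For part (2), I would reduce to (1) by a complementation symmetry. The set-system complement map $c(\mathcal{F}) := \{[n]\setminus Z : Z \in \mathcal{F}\}$ preserves inclusions and satisfies $\delta \circ c = c \circ \partial$ and $c \circ S_{ij} = S_{ji} \circ c$ (routine verifications using that $c$ toggles each $\{i,j\}$-type with its opposite). Applying (1) to $c(A)$ with the roles of $i$ and $j$ interchanged gives $\delta(S_{ji}(c(A))) \subseteq S_{ji}(\delta(c(A)))$, which via these identities translates to $c(\partial(S_{ij}A)) \subseteq c(S_{ij}(\partial A))$, i.e., exactly (2).

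The main obstacle is purely the bookkeeping for the case split in (1): one must track how the $\{i,j\}$-type of $Y$ can differ from that of $X$ (which happens precisely when $k \in \{i,j\}$), and correctly produce a shadow witness for $Y$---and, when required by clause (iii), for its swap partner---in every combination. Once the characterization of $S_{ij}$ is laid out, each sub-case reduces to a one-line check.
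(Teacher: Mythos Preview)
Your proposal is correct. The membership characterization of $S_{ij}$ you state is accurate, the finite case analysis for part (1) goes through as you describe (the only non-trivial branches being exactly the two you flag), and the complementation identities $\delta\circ c = c\circ\partial$ and $c\circ S_{ij} = S_{ji}\circ c$ are valid and yield (2) from (1).

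Your route is, however, genuinely different from the paper's. You give the classical direct argument: an element-wise case split on the $\{i,j\}$-type of $Y$ and of its preimage $X$, producing in each sub-case an explicit witness (or pair of witnesses) in $\delta A$. The paper instead works layer by layer, restricting to the weight-$k$ slice $A_k = A\cap(D_k^n\setminus D_{k-1}^n)$, and then recasts both sides in terms of Minkowski sums: it writes $\delta(S_{ij}A_k)$ and $S_{ij}(\delta A_k)$ as $(D_1^n + C_{\{i,j\}}(\cdot))\setminus D_k^n$ and $C_{\{i,j\}}(D_1^n + \cdot)\setminus D_k^n$ respectively, applied to $A_k\cup D_{k-1}^n$, so that the inclusion becomes an instance of the sumset compression Lemma~\ref{sumcomp}. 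The complementation reduction from (2) to (1) is shared by both arguments.

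What each approach buys: your proof is entirely self-contained and does not rely on any of the compression infrastructure, which is the standard treatment in extremal set theory (and indeed the paper cites this lemma as well known). The paper's proof is shorter once Lemma~\ref{sumcomp} is in hand, and serves the expository purpose of exhibiting the shift--shadow inequality as a special case of the general sumset compression principle, reinforcing the paper's theme that compressions control sumsets.
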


\begin{proof}
By passing from $A$ to $\sum_i e_i - A$, it is enough to prove only one of the inclusions.
Denote $A = \bigcup_{k=0}^{n} A_k$ where $A_k = A \cap \left(D_k^n \setminus D_{k-1}^n\right)$.
Note that we can work with each $A_k$ separately.
One can write 
$$ \delta \left( S_{ij}A_k \right) = \left(D_1^n + C_{ij} \left( A_k \cup D_{k-1}^n \right)\right) \setminus D_k^n ,$$
and 
$$ S_{ij}\left(\delta A_k\right) = C_{ij} \left( D_1^n + \left( A_k \cup D_{k-1}^n \right) \right) \setminus D_k^n ,$$
yielding our claim by Lemma~\ref{sumcomp}, 
since $D_1^n + C_{ij}(B) = C_{ij}(D_1^n) + C_{ij}(B) \subseteq C_{ij}(D_1^n + B)$.
\end{proof}

Our isoperimetric inequality concerns a family of non-empty downsets $C_1 ... C_l \subseteq \Z_2^n$, 
rather than a single one.
For the volume and the shadow we take the average quantities, denoted by: 
$$ \mathrm{E}\left[C\right] = \frac{1}{l}\sum\limits_{m=1}^{l}\left|C_m\right| \;\;\;\;\;\;\;\; 
\mathrm{E}\left[\delta C\right] = \frac{1}{l}\sum\limits_{m=1}^{l}\left|\delta(C_m)\right| $$
It is hard to make a meaningful statement about these average quantities without limiting the downsets somehow. 
To see this, consider what happens when each $C_m$ is either full or empty.
We limit the variability of the downsets by assuming the {\em antichain condition}. 
Namely, we require that for each $i$ and $j$, 
$C_i \setminus C_j$ is an antichain with respect to set-systems inclusion, or equivalently $C_j \supseteq \partial C_i$. 

\begin{prop}\label{iso1}
Suppose $C_1 ... C_l \subseteq \Z_2^n$ is a family of downsets which satisfies the antichain condition.
If
$$ \mathrm{E}[C] = \binom{n}{0} + \binom{n}{1} + ... + \binom{n}{k-1} + p \binom{n}{k} $$
for some integer $k \geq 0$ and real number $0 \leq p < 1$, then
$$ \mathrm{E}\left[\delta C\right] \geq \binom{n}{1} + \binom{n}{2} + ... + \binom{n}{k} + p \binom{n}{k+1} $$
\end{prop}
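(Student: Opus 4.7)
The proposition extends Frankl's inductive proof of Harper's isoperimetric theorem~\cite{frankl} from a single downset to an averaged family, with the antichain condition compensating for the looseness introduced by averaging. The crucial observation is that the antichain condition is precisely what is preserved by the classification operation $\mathcal{F}\mapsto\mathcal{F}^\pm$, so induction on $n$ closes cleanly. Throughout, let $\Phi_n(x)$ denote the right-hand side of the proposition viewed as a function of $x=\mathrm{E}[C]$ in dimension $n$.

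\textbf{Reduction to shift-minimality.} First I would show that applying the same shift $S_{st}$ (with $s<t$) simultaneously to every $C_m$ preserves both the antichain condition and the shadow sizes. Preservation of individual shadows is Lemma~\ref{shiftshadows}(1). For the antichain condition $\partial C_i \subseteq C_j$: note that $\partial C_i$ is itself a downset (the non-maximal elements of $C_i$), that $S_{st}$ is monotone on downsets (a routine case check), and hence $\partial(S_{st}C_i)\subseteq S_{st}(\partial C_i)\subseteq S_{st}(C_j)$ by Lemma~\ref{shiftshadows}(2). Iterating a finite number of shifts, we may assume every $C_m$ is shift-minimal.

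\textbf{Inductive step.} Then I would induct on $n$, the base $n=0$ being trivial. Decompose each $C_m$ as the disjoint union $C_m^-\cup(e_n+C_m^+)$ with $C_m^\pm\subseteq\Z_2^{n-1}$; shift-minimality forces $C_m^+\subseteq C_m^-$. A short check shows that $\{C_m^-\}$ and $\{C_m^+\}$ both inherit the antichain condition in $\Z_2^{n-1}$: for instance, $K\in\partial C_i^+$ means $K\cup\{r\}\in C_i^+$ for some $r\leq n-1$, so $K\cup\{n\}\in\partial C_i\subseteq C_j$, whence $K\in C_j^+$. Lemma~\ref{frankl_lemma} then gives $|\delta C_m|=|\delta C_m^-|+|C_m^-|$. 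Setting $\alpha=\mathrm{E}[C^-]$ and $\beta=\mathrm{E}[C^+]$ (so $\alpha+\beta=\mathrm{E}[C]$), the induction hypothesis applied to $\{C_m^-\}$ gives $\mathrm{E}[\delta C^-]\geq\Phi_{n-1}(\alpha)$, whence
$$ \mathrm{E}[\delta C]\;\geq\;\alpha+\Phi_{n-1}(\alpha). $$
Applying the induction hypothesis also to $\{C_m^+\}$ and using the pointwise inclusion $\delta C_m^+\subseteq C_m^-$ (Lemma~\ref{frankl_lemma}(1)) yields the auxiliary constraint $\alpha\geq\mathrm{E}[\delta C^+]\geq\Phi_{n-1}(\beta)$.

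\textbf{Numerical verification.} The main obstacle is to check
$$ \alpha+\Phi_{n-1}(\alpha)\;\geq\;\Phi_n(\alpha+\beta) $$
under the constraints $\beta\leq\alpha$ and $\alpha\geq\Phi_{n-1}(\beta)$. This reduces to a Pascal-identity calculation analogous to the Katona--Kruskal shadow verification: using $\binom{n}{j}=\binom{n-1}{j-1}+\binom{n-1}{j}$ one writes $\Phi_n(\alpha+\beta)$ as a sum over the two halves of $\Z_2^n$ and matches the pieces against the two summands on the left, the auxiliary constraint $\alpha\geq\Phi_{n-1}(\beta)$ supplying the needed slack at layer boundaries. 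Equality holds when each $C_m$ is a Hamming ball $D_k^n$, confirming that averaging over a family satisfying the antichain condition does not weaken the single-set Harper bound.
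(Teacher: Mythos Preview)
Your overall strategy matches the paper's (reduce to shift-minimal families, classify by the last coordinate, apply Lemma~\ref{frankl_lemma}, and induct on $n$), but the ``numerical verification'' step is false as stated, and the reason is that you have dropped the crucial $+1$ coming from the \emph{strict} inclusion in Lemma~\ref{frankl_lemma}(1). Concretely, take $n=2$ and $\alpha=\beta=\tfrac12$: both of your constraints $\beta\le\alpha$ and $\alpha\ge\Phi_{n-1}(\beta)$ hold (since $\Phi_1(\tfrac12)=\tfrac12$), yet $\alpha+\Phi_{n-1}(\alpha)=1<2=\Phi_2(1)=\Phi_n(\alpha+\beta)$. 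The correct auxiliary constraint is $\alpha\ge 1+\Phi_{n-1}(\beta)$, obtained from $|C_m^-|\ge 1+|\delta(C_m^+)|$ for each $m$; with this extra $1$ the Pascal-rule computation goes through exactly as in the paper, since $1+\Phi_{n-1}(\beta)$ restores the missing $\binom{n-1}{0}$ term.

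However, the $+1$ is only available when every $C_m$ is nonempty, and this is precisely where the case $k=0$ (i.e.\ $\mathrm{E}[C]<1$) must be handled separately. If some $C_m=\varnothing$ then the antichain condition forces every $C_{m'}=C_{m'}\setminus\varnothing$ to be an antichain, hence each $C_{m'}\in\{\varnothing,\{0\}\}$ and $\mathrm{E}[\delta C]=n\cdot\mathrm{E}[C]$ exactly. This is the one place in the argument where the antichain hypothesis does essential work beyond being inherited by $C^\pm$; your sketch omits it, and without it the induction does not close. Once you add this base case and strengthen the constraint to $\alpha\ge 1+\Phi_{n-1}(\beta)$ for $k\ge 1$, your argument coincides with the paper's.
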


Since for non-empty downsets $C + D_1^n = \{0\} \cup \delta C$,
the corresponding inequality in the language of neighborhoods is as follows.

\begin{cor}\label{iso2}
In the setting of Proposition~\ref{iso1}, if $C_1 ... C_l$ are non-empty then
$$ \mathrm{E}\left[C + D_1^n\right] \geq \binom{n}{0} + \binom{n}{1} + ... + \binom{n}{k} + p \binom{n}{k+1} $$
\end{cor}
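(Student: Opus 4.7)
The plan is to derive Corollary~\ref{iso2} as an essentially immediate consequence of Proposition~\ref{iso1}, using the translation between the shadow $\delta C$ and the neighborhood $C + D_1^n$ that the text highlights just before the corollary statement.

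First I would verify carefully that for a non-empty downset $C \subseteq \Z_2^n$ one has the disjoint decomposition
$$C + D_1^n \;=\; \{0\} \;\cup\; \delta C.$$
The inclusion $\{0\} \subseteq C + D_1^n$ holds because $0 \in C$ (downsets contain the minimum) and $0 \in D_1^n$. For $x \in C + D_1^n$ with $x \neq 0$, writing $x = y + z$ with $y \in C$ and $z \in D_1^n$, either $z = 0$ (and then $x = y \in C$, and in fact $x \in \delta C$ by replacing $y$ with $y - e_i$ for any $i$ in its support, which lies in $C$ since $C$ is a downset) or $z = e_i$ for some $i$. In the latter case, if $i$ is in the support of $y$, then again $x = y - e_i$ lies in $C$ and hence in $\delta C$; if $i$ is not in the support of $y$, then $x = y + e_i \in \delta C$ by definition. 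Disjointness is clear because $0 \notin \delta C$ (every element of $\delta C$ has nonempty support). Hence $|C + D_1^n| = 1 + |\delta C|$.

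Averaging this identity over $m = 1, \ldots, l$ gives $\mathrm{E}[C + D_1^n] = 1 + \mathrm{E}[\delta C]$. Applying Proposition~\ref{iso1} under the same hypotheses then yields
$$\mathrm{E}[C + D_1^n] \;\geq\; 1 + \binom{n}{1} + \binom{n}{2} + \ldots + \binom{n}{k} + p \binom{n}{k+1},$$
and recognizing $1 = \binom{n}{0}$ completes the proof. No obstacle arises here; the only thing to be careful about is the non-emptiness hypothesis, which is exactly what is needed to ensure $0 \in C_m$ and hence that the extra $\binom{n}{0}$ term appears for every summand in the average (so that no boundary term is lost when passing from Proposition~\ref{iso1} to its neighborhood form).
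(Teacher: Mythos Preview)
Your proposal is correct and follows precisely the route the paper takes: the paper states the identity $C + D_1^n = \{0\} \cup \delta C$ for non-empty downsets just before the corollary and treats Corollary~\ref{iso2} as an immediate consequence of Proposition~\ref{iso1}, which is exactly what you do (with more explicit verification of the identity than the paper provides).
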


\begin{proof}(of Proposition~\ref{iso1})
We may assume that the downsets are shift-minimal.
Indeed, for each downset $C_m$ clearly $S_{ij}C_m$ is a downset of the same size, while
$\left|\delta \left(S_{ij}C_m\right)\right| \leq \left|S_{ij}\left(\delta C_m\right)\right| = \left|\delta C_m\right|$ 
by Lemma~\ref{shiftshadows}.
If $C_{m'} \setminus C_m$ is an antichain, then $C_m \supseteq \partial C_{m'}$, 
hence $S_{ij}C_m \supseteq S_{ij}\left(\partial C_{m'}\right) \supseteq \partial\left(S_{ij}C_{m'}\right)$
by Lemma~\ref{shiftshadows} again, and hence $S_{ij}C_{m'} \setminus S_{ij}C_m$ is an antichain as well.
In conclusion, $S_{ij}C_1 ... S_{ij}C_l$ satisfy the antichain condition,
$\mathrm{E}[C] = \mathrm{E}[S_{ij}C]$ and $\mathrm{E}[\delta C] \geq \mathrm{E}[\delta (S_{ij}C)]$.
After a finite sequence of shifts the downsets are all shift-minimal,
since for a proper shift $\sum_{m}\hbar\left(S_{ij}C_m\right) < \sum_{m}\hbar\left(C_m\right)$.

The case $k=0$ is established separately.
Note that in this case $\mathrm{E}[C] < 1$, hence $C_m = \varnothing$ for some $m$.
Actually, this is a sufficient condition for $k=0$, 
because all other downsets are either $\varnothing$ or $\{0\}$ by the antichain condition.
Since $\delta \{0\} = \left\{e_1,...,e_n\right\}$, 
clearly $\mathrm{E}\left[\delta C\right] = n \cdot \mathrm{E}[C]$ as required.

Following Frankl, we proceed by induction on $n$.
By convention $\binom{n}{k} = 0$ for $n < k$.
Thus, for $n=0$ the lemma is vacuously satisfied by $\mathrm{E}\left[\delta C\right] \geq 0$.

For positive $k$ and $n$, we employ the induction hypothesis on the families $C_1^- ... C_l^-$ 
and $C_1^+ ... C_l^+$ in $\Z_2^{n-1}$.
It is easily checked that given a downset $C_m$, the
sets $C_m^+$ and $C_m^-$ are downsets as well.
In addition, if $C_{m'} \setminus C_m$ is an antichain, 
then so are its two parts, $C_{m'}^- \setminus C_m^-$ and $C_{m'}^+ \setminus C_m^+$,
hence the new families satisfy the antichain condition.

By the induction hypothesis on $C_1^+ ... C_l^+ \subseteq \Z_2^{n-1}$, at least one of the following must hold:
\begin{align*}
&\mathrm{E}\left[C^+\right] < 
\binom{n-1}{0} + \binom{n-1}{1} + ... + \binom{n-1}{k-2} + p \binom{n-1}{k-1} \\
&\mathrm{E}\left[\delta\left(C^+\right)\right] \geq 
\binom{n-1}{1} + \binom{n-1}{2} +...+ \binom{n-1}{k-1} + p\binom{n-1}{k}
\end{align*}
Use $\mathrm{E}\left[C^-\right] = \mathrm{E}[C] - \mathrm{E}\left[C^+\right]$ and Pascal's rule in the first case, 
or $\mathrm{E}\left[C^-\right] \geq 1 + \mathrm{E}\left[\delta\left(C^+\right)\right]$ 
by Lemma~\ref{frankl_lemma}(1) in the second one, to deduce:
$$ \mathrm{E}\left[C^-\right] \geq \binom{n-1}{0} + \binom{n-1}{1} + ... + \binom{n-1}{k-1} + p \binom{n-1}{k} $$
Note that since $k > 0$ each $C_m$ is non-empty, so there is proper inclusion in the lemma, 
which yields the extra $1$ in the calculation.
By the induction hypothesis on $C_1^- ... C_l^- \subseteq \Z_2^{n-1}$:
$$ \mathrm{E}\left[\delta\left(C^-\right)\right] \geq 
\binom{n-1}{1} + \binom{n-1}{2} + ... + \binom{n-1}{k} + p \binom{n-1}{k+1} $$
By Lemma~\ref{frankl_lemma}, 
$ \mathrm{E}\big{[}\delta C\big{]} = 
\mathrm{E}\big{[}\left(\delta C\right)^-\big{]} + \mathrm{E}\big{[}\left(\delta C\right)^+\big{]} = 
\mathrm{E}\big{[}\delta\left( C^- \right)\big{]} + \mathrm{E}\big{[} C^- \big{]} $,
hence by Pascal's rule:
$$ \mathrm{E}\left[\delta C\right] \geq \binom{n}{1} + \binom{n}{2} + ... + \binom{n}{k} + p \binom{n}{k+1} $$
\end{proof}

\subsection{Proof of Lower Bound}

\begin{proof}(of Theorem~\ref{AB})
The general idea is similar to the case $A+A$ discussed in the previous section.
By applying various compressions, the sets $A$ and $B$ acquire certain structural properties.
These, in turn, allow us to derive estimates on the cardinality of $A+B$.

Lemma~\ref{sumcomp} asserts that compressions do not increase sumsets:
$\left|C_I(A)+C_I(B)\right| \leq \left|A+B\right|$ holds while $|C_I(A)|=|A|$ and $|C_I(B)|=|B|$.
Thus, in the search for a lower bound for $|A+B|$,
one can first apply a compression $C_I$ on $A$ and $B$ simultaneously. 
Since $\left\langle A \right\rangle = G$, we may suppose $E = \{0,e_1,e_2,...,e_n\} \subseteq A$
and restrict ourselves only to compressions that preserve the inclusion $E \subseteq A$.
By Lemma~\ref{comp}(3), if a compression $C_I$ changes either $A$ or $B$, 
then $\hbar(A)+\hbar(B)$ strictly decreases. 
It follows that every sequence of such compressions must terminate.
In conclusion, we can assume that both $A$ and $B$ are invariant under these compressions,
or for short $\langle\langle E \subseteq A \rangle\rangle$-\emph{compressed}.
This implies that $B$ is $I$-compressed for every $I \subseteq [n]$ such that $A$ is $I$-compressed.

Lemma~\ref{structure} provides a description of $A$ under this assumption. 
In particular,
$H \subseteq A \subseteq H+E$ for some subgroup $H = \ip{0,e_1,...,e_h}$.
We next derive some structural properties of $B$.

\begin{lemma}\label{structureB}
Suppose $A, B \subseteq G = \Z_2^n$ are $\langle\langle E \subseteq A \rangle\rangle$-compressed.
Let $H \subseteq A$ be as in Lemma~\ref{structure}.
Consider $G/H \cong \Z_2^m$ where $m = n - h = \mathrm{codim}\;H$,
with the basis $\left\{e_{h+1} + H, ..., e_{h+m} + H\right\}$ and the partial order of the corresponding set-system. 
For $1 \leq j \leq |H|$ let
$$ C_j = \left\{ H' \in G/H \;\Big{|}\; |B \cap H'| \geq j \right\} $$
Then $C_1...C_{|H|}$ are downsets, and satisfy the antichain condition.
\end{lemma}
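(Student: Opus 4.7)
The two compression invariances of $B$ inherited from those of $A$ (via Lemma~\ref{structure} and the $\langle\langle E \subseteq A \rangle\rangle$-compressed hypothesis) are the key: $B$ is $\{h+r\}$-compressed for each $r \in [m]$, and, more strongly, $B$ is $\{1,\dots,h,h+r\}$-compressed for each such $r$. My plan is to derive the downset property from the first and the antichain condition from the second.

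For the downset property I would fix $H' \in G/H$ labeled by $S \subseteq [m]$, pick $r \in S$, and set $H'' = H' + e_{h+r}$, labeled by $S \setminus \{r\}$. The $H_{\{h+r\}}$-cosets sitting inside $H'' \cup H'$ are the pairs $\{v, v + e_{h+r}\}$ with $v \in H''$; within each pair $v$ precedes $v + e_{h+r}$ in lex order since they differ only at coordinate $h+r$. Hence the $\{h+r\}$-compression of $B$ forces $v + e_{h+r} \in B \Rightarrow v \in B$, which produces an injection $B \cap H' \hookrightarrow B \cap H''$. Thus $|B \cap H''| \geq |B \cap H'|$, and $H' \in C_j$ implies $H'' \in C_j$; iterating over the generators in $S$ then shows that each $C_j$ is a downset.

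For the antichain condition I plan to reduce to the strongest single case: since $C_1 \supseteq \dots \supseteq C_{|H|}$, the case $i \geq j$ is immediate from $\partial C_i \subseteq C_i \subseteq C_j$, while for $i < j$ the monotonicity of $\partial$ together with nesting gives $\partial C_i \subseteq \partial C_1 \subseteq C_{|H|} \subseteq C_j$. So it suffices to prove $\partial C_1 \subseteq C_{|H|}$. Take $H' \in C_1$ and a child $H'' = H' + e_{h+r}$, and set $I = \{1,\dots,h,h+r\}$. The $H_I$-coset containing $H'$ is exactly $H'' \cup H'$, and in the lex order on $\Z_2^n$ every element of $H''$ precedes every element of $H'$ (they agree on all coordinates above $h+r$ and differ at $h+r$, where $H''$ is $0$). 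Since $B$ is $I$-compressed, $B \cap (H'' \cup H')$ is an initial lex segment of this coset, and the hypothesis $B \cap H' \neq \varnothing$ forces this initial segment to cover all of $H''$; hence $H'' \subseteq B$ and $H'' \in C_{|H|}$.

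The only conceptual point, rather than bookkeeping, is to recognize the division of labor between the two kinds of compression: the singleton $\{h+r\}$-compression only shifts mass within pairs and yields nothing more than the downset property, whereas the richer $\{1,\dots,h,h+r\}$-compression from Lemma~\ref{structure}(3) saturates the lex-smaller $H$-coset before any mass is admitted into the lex-larger one, delivering the antichain condition in one stroke. Once this match between Lemma~\ref{structure}(3) and the shape of the antichain requirement is noticed, the rest is a direct unwinding of the definitions.
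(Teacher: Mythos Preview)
Your proof is correct and rests on exactly the same key observation as the paper's: the $\{1,\dots,h,h+r\}$-compression forces $B$ restricted to any adjacent pair of $H$-cosets to be a lex initial segment, so that if the upper coset meets $B$ then the lower one is entirely contained in $B$. The paper derives \emph{both} conclusions from this single implication --- since $H''\in C_j\subseteq C_1 \Rightarrow H'\in C_{|H|}\subseteq C_j$ already gives the downset property --- making your separate $\{h+r\}$-compression argument for that part correct but redundant.
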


\begin{proof}
By Lemma~\ref{structure}(3), $A$ is $\{1,...,h,h+i\}$-compressed for $1 \leq i \leq m$,
and therefore so is $B$.

Let $H' \prec H''$ be adjacent $H$-cosets in the partial order. 
$H'' = e_{h+i} + H'$ for some $1 \leq i \leq m$.
Since $B$ is $\{1,...,h,h+i\}$-compressed, $B \cap (H' \cup H'')$ must be an initial segment of $H' \cup H''$.
Note that all $H'$ elements are lexicographically smaller than those of $H''$.
Consequently, if $B \cap H'' \neq \varnothing$ then necessarily $H' \subseteq B$.
In other words, $H'' \in C_1 \;\Rightarrow\; H' \in C_{|H|}$ for each such pair.

In particular, $C_j$ is a downset because 
$H'' \in C_j \subseteq C_1 \Rightarrow H' \in C_{|H|} \subseteq C_j$,
and $C_j \setminus C_k$ is an antichain since 
$C_j \setminus C_k  \subseteq C_1 \setminus C_{|H|} \not\supseteq \{H',H''\}$.
\end{proof}

We can conclude now the proof of Theorem~\ref{AB} in the following three steps:
\begin{enumerate}
\item 
We use the structure of the compressed sets to find new expressions for the cardinalities of $B$ and $A+B$.
Let $C_1 ... C_{|H|}$ be as in Lemma~\ref{structureB}.
By interchanging the order of summation:
$$ |B| = \sum\limits_{H' \in G/H} |B \cap H'| = 
\sum\limits_{H' \in G/H}\#\left\{j \in \N \;\Big{|}\; |B \cap H'| \geq j\right\} \;=\; \sum\limits_{j=1}^{|H|}|C_j| $$
We estimate $|A+B|$ in a similar fashion.
For $1 \leq j \leq |H|$, suppose $H'' \in \delta(C_j) \cup \{H\}$.
We show that $A+B$ intersects $H''$ in $j$ elements at the least:
\begin{itemize}
\item 
If $H''=H$, use $H \subseteq A$ and $0 \in B \neq \varnothing$ to obtain
$ \left|(A+B) \cap H''\right| \geq \left|(H+0) \cap H\right| \geq j$.
\item
Otherwise $H'' = e_{h+i} + H'$ for some $H' \in C_j$ and $1 \leq i \leq m  = \mathrm{codim}\;H$.
Since $e_{h+i} \in E \subseteq A$, clearly
$ \left|(A+B) \cap H''\right| \geq \left|(e_{h+i}+B) \cap (e_{h+i} + H')\right| = |B \cap H'| \geq j $.
\end{itemize}
Consequently:
$$ |A+B| = 
\sum\limits_{H'' \in G/H}\#\left\{j \in \N \;\Big{|}\; |(A+B) \cap H''| \geq j\right\} \;\geq\;
\sum\limits_{j=1}^{|H|}|\delta(C_j) \cup \{H\}| $$

\item
We use the isoperimetric inequality in order to obtain a lower bound on $|A+B|$ given $m$ and $|B|$.
Let $0 \leq k \leq m$ and $w \in [-1,1]$ be such that:
$$ |B| = \frac{ \binom{m+1}{0} + \binom{m+1}{1} + ... + \binom{m+1}{k} + w \binom{m}{k} }{2^{m+1}} \cdot |G|$$
We substitute $|B| = \sum |C_j|$ in the left-hand side, 
apply Pascal's rule to $\binom{m+1}{1}...\binom{m+1}{k}$ on the right-hand side, 
and divide both by $|H| = |G|/2^m$, to obtain:
$$ \mathrm{E}[C] = \frac{1}{|H|} \sum\limits_{j=1}^{|H|}|C_j| = \frac{|B|}{|H|} = 
\binom{m}{0} + \binom{m}{1} + ... + \binom{m}{k-1} + \frac{1+w}{2} \binom{m}{k} $$
Now by Proposition~\ref{iso1}
$$ \mathrm{E}\left[\{H\} \cup \delta C\right] \;\geq\; 
1 + \binom{m}{1} + \binom{m}{2} + ... + \binom{m}{k} + \frac{1+w}{2} \binom{m}{k+1} $$
where the union is disjoint since always $H \notin \delta C_j$.
In terms of $A$ and $B$, this implies:
$$ |A+B| \geq \frac{\binom{m+1}{0} + \binom{m+1}{1} + ... + \binom{m+1}{k} + \binom{m+1}{k+1} + w \binom{m}{k+1}}{2^{m+1}} \cdot |G|$$

\item
What values can $m = \mathrm{codim}\;H$ take?
By Lemma~\ref{structure}(7),
$\left(1 + \frac{m}{2}\right) / 2^m \geq |A|/|G|$,
where the case $m=1$ is separately deduced from the assumption $|A|/|G| \leq 3/4$.
On the other hand, by the theorem's assumption on $t$, 
$|A|/|G| > (t+2)/2^{t+1} = \left(1 + \frac{t}{2}\right) / 2^t$.
Since the sequence $\left(1 + \frac{n}{2}\right) / 2^n$ is monotone, we infer $m < t$.

The theorem is obtained by plugging $m = t-1$ into the derived lower bound.
We claim that for smaller $m$ the bound is even higher, as demonstrated in Figure~\ref{fig_ab}.
Indeed, for each $1 \leq i \leq t-2$, the graph of the lower bound on $|A+B|$ given $m=i$ is
concave down by the log-concavity of the binomial coefficients, 
$\binom{m}{k}/\binom{m}{k-1} \geq \binom{m}{k+1}/\binom{m}{k}$.
Thus the graph of $m=i+1$, which connects the midpoints of adjacent segments in the $m=i$ graph, must be lower.
\end{enumerate}
\end{proof}

\begin{rmk*}
By the Hamming balls construction,
the lower bound we have found is optimal on a biparametric discrete family of points.
In view of our treatment of $F(K)$ in the previous section, we expect
that at intermediate points better bounds should be provable.

There are three points where our approach to Theorem~\ref{AB} may be suboptimal: 
the isoperimetric inequality we use is not always perfectly tight, 
the addition of $H \cup E$ instead of the whole of $A$,
and dropping the assumption on $B$'s affine span.

It is perhaps worth remarking that the machinery of compressions can still be applied
under the assumption $\left\langle A \right\rangle = \left\langle B \right\rangle = G$.
This is done by showing that without loss of generality we may assume that $A$ and $B$ 
are simultaneously compressed such that they include a common affine basis.

Here is a brief outline of how this is done.
First, partition $A$ and $B$ into their intersections with cosets of $\langle (A-A)\cap(B-B) \rangle$.
These parts can be translated without increasing $|A+B|$, such that $A-A$ and $B-B$ include a common basis of $G$.
Then apply $\{i\}$-compressions with respect to this basis, until it is included in $A \cap B$.
\end{rmk*}

\section{Acknowledgment}\label{ack}

I would like to thank my advisor, Professor Nati Linial, 
for his patient and helpful guidance during the research and the preparation of this manuscript.

{
\scriptsize
\bibliographystyle{plain}
\bibliography{sumsofsets}
}

\end{document}